\documentclass[11pt]{article}
\usepackage[linesnumbered,ruled,vlined]{algorithm2e}
\usepackage{amsfonts}
\usepackage{amsmath}
\usepackage{amssymb}
\usepackage{amsthm}
\usepackage{bbm}
\usepackage{cases}
\usepackage{bm}
\usepackage{float}
\usepackage{braket}
\usepackage{graphicx}
\usepackage{subcaption}
\usepackage{caption}
\usepackage[title]{appendix}
\usepackage{dsfont}
\usepackage{booktabs}
\usepackage{makecell}
\usepackage{authblk}
\usepackage{enumitem}
\usepackage{graphicx}
\usepackage{hyperref}
\usepackage{cleveref}
\usepackage{lineno}
\usepackage{mathrsfs}
\usepackage{mathtools}
\usepackage{multirow}
\usepackage{multicol}
\usepackage{natbib}
\usepackage{ulem}
\usepackage{xcolor}
\usepackage[version=4]{mhchem}

\allowdisplaybreaks[4]

\numberwithin{equation}{section}
\theoremstyle{plain}
\newtheorem{theorem}{Theorem}[section]  
\newtheorem{lemma}{Lemma}[section]
\newtheorem{corollary}{Corollary}[section]

\newtheorem{assumption}{Assumption}[section]

\theoremstyle{plain}
\newtheorem{definition}{Definition}[section] %

\theoremstyle{remark}
\newtheorem{remark}{Remark}[section]

\hypersetup{colorlinks,breaklinks}

\newcommand{\dd}{\mathrm{\,d}}

\newcommand{\eps}{\varepsilon}
\newcommand{\F}{\mathrm{F}}

\newcommand{\st}{\mathrm{s.~t.}}
\newcommand{\St}{\mathrm{St}}

\newcommand{\trace}{\mathrm{Tr}}

\newcommand{\calI}{\mathcal{I}}

\newcommand{\calL}{\mathcal{L}}

\newcommand{\calO}{\mathcal{O}}

\newcommand{\calT}{\mathcal{T}}

\newcommand{\calY}{\mathcal{Y}}

\newcommand{\N}{\mathbb{N}}

\newcommand{\R}{\mathbb{R}}
\renewcommand{\S}{\mathbb{S}}

\newcommand{\bmu}{\boldsymbol{\mu}}

\newcommand{\bpsi}{\boldsymbol{\psi}}
\newcommand{\brho}{\boldsymbol{\rho}}

\newcommand{\rr}{\boldsymbol{r}}
\newcommand{\RR}{\boldsymbol{R}}

\newcommand{\lrbrace}[1]{\left\{#1\right\}}
\newcommand{\lrbracket}[1]{\left(#1\right)}

\newcommand{\abs}[1]{\left|#1\right|}
\newcommand{\inner}[1]{\left\langle#1\right\rangle}
\newcommand{\norm}[1]{\left\Vert#1\right\Vert}
\newcommand{\snorm}[1]{\Vert#1\Vert}

\newcommand{\xc}{\mathrm{xc}}
\newcommand{\nb}{n_{\rm{b}}}

\newcommand{\proj}{\mathrm{Proj}}

\newcommand{\projj}{{\rm{P}}_{X^{(k+1)}}^{\perp}}

\definecolor{lightblue}{rgb}{0.957,0.963,0.975}
\definecolor{midblue}{rgb}{0.937,0.943,0.965}
\definecolor{deepblue}{rgb}{0.325,0.427,0.569}
\definecolor{blocktitleblue}{rgb}{0.225,0.427,0.669}
\definecolor{lightred}{rgb}{0.996,0.969,0.969}
\definecolor{midred}{rgb}{0.976,0.949,0.949}
\definecolor{deepred}{rgb}{0.686,0.133,0.098}
\definecolor{deepgreen}{rgb}{0,0.5,0}
\definecolor{halfgray}{gray}{0.55}

\definecolor{lightpurple}{rgb}{0.978,0.978,1.0}
\definecolor{deeppurple}{rgb}{0.353,0.275,0.478}

\DeclareMathOperator*{\Span}{span}
\DeclareMathOperator*{\argmin}{arg\,min}

\title{
   \textbf{A Damped Subspace Splitting Algorithm for Constrained Density Functional Theory}\footnote{The work of Xin Liu was supported in part by the National Natural Science Foundation of China (No. 12125108). The work of Yukuan Hu has received funding from the European Research Council (ERC) under the European Union’s Horizon 2020 research and innovation program (grant agreement EMC2 No. 810367). The work of Guanghui Hu was supported by The Science and Technology Development Fund, Macao SAR (Nos. 0068/2024/RIA1, 001/2024/SKL), National Natural Science Foundation of China (No. 11922120), MYRG of University of Macau (No. MYRG-CRG2024-00042-FST).}
}

\author[1]{Yuanming Su}
\author[2]{Yukuan Hu}
\author[1]{Xin Liu}
\author[3]{Guanghui Hu}

\affil[1]{{\small{State Key Laboratory of Mathematical Sciences, Academy of Mathematics and Systems Science, Chinese Academy of Sciences, Beijing 100190, China and School of Mathematical Sciences, University of Chinese Academy of Sciences, Beijing 100049, China}}}
\affil[2]{{\small{CERMICS, CNRS, \'Ecole des Ponts, Institut Polytechnique de Paris, 6-8 avenue Blaise Pascal, Cit\'e Descartes, 77455 Marne-la-Vall\'ee, France}}}
\affil[3]{\small{State Key Laboratory of Internet of Things for Smart City and Department of Mathematics, University of Macau, Macao, China}}

\begin{document}

\maketitle

\begin{abstract}
Constrained density functional theory (CDFT) provides a powerful framework for describing electronically excited and charge-localized states, which underlie a broad range of physical and chemical phenomena. However, the discretized optimization problems arising from CDFT calculations remain challenging, owing to the presence of both the Stiefel manifold constraint and additional nonconvex quadratic constraints. Existing algorithms either fail to enforce the quadratic constraints with high accuracy or face convergence issues due to double-loop iterative structures. In this paper, we first derive a subspace-splitting reformulation that decouples the two groups of constraints, by exploiting the inherent rotation invariance and introducing a nonlinear subspace alignment constraint. Based on this reformulation, we propose a single-loop damped alternating direction method of multipliers, called DASSP. To the best of our knowledge, DASSP is the first algorithm for CDFT calculations with rigorous convergence guarantees. Each iteration of DASSP comprises a spectral minimization step, a projected gradient step, and a damped dual ascent step, all of which admit efficient implementations. Numerical results on synthetic and realistic CDFT problems demonstrate that DASSP attains high feasibility accuracy and exhibits favorable efficiency without compromising robustness. We expect that this work will pave the way toward reliable and efficient large-scale CDFT applications.
\end{abstract}

\section{Introduction}\label{sec:introduction}







\par Building on the seminal works of the 1960s \cite{hohenberg1964inhomogeneous,kohn1965self}, density functional theory (DFT) has been the workhorse for first-principles simulations of ground-state properties of molecules, materials, and nanosystems \cite{burke2012perspective}. 
While the ground states are recognized to be the most stable in nature, excited states play a crucial role in a wide range of physical and chemical processes and are essential for understanding phenomena such as light absorption, emission, and energy transfer \cite{balzani2014photochemistry,turro2009principles}. To describe electronically excited or charge-localized states within the DFT framework, several extensions and approaches have been developed, including constrained DFT (CDFT) \cite{dederichs1984ground,kaduk2012constrained}. 

\par This paper is concerned with the optimization problems arising from CDFT, which have received little attention from the mathematics community, in stark contrast to the DFT \cite{cances2023density,lin2019numerical}. In its general form \cite{gonze2022constrained,kaduk2012constrained}, for an $N$-electron system, the CDFT problem amounts to solving\footnote{We consider real Hamiltonians without spin-orbit coupling or magnetic fields; therefore, the spinor orbitals can be assumed to be real-valued.}
\begin{align}
    \inf_{\{\bpsi_i\}_{i=1}^N}&\quad E[\brho[\{\bpsi_i\}_{i=1}^N]],\nonumber\\
    \st& \quad \int_{\R^3}\bpsi_{i}^{\top}(\rr)\bpsi_{j}(\rr)\dd \rr=\delta_{ij},~~i,j=1,\ldots,N,\label{eqn:CDFT continuous general} \\
    &\quad \sum_{\sigma,\sigma'\in\{\uparrow,\downarrow\}}\int_{\R^3} w_j^{\sigma,\sigma'}\rho^{\sigma,\sigma'}[\{\bpsi_i\}_{i=1}^N](\rr)\dd \rr=N_j,~~j=1,\ldots,m.\nonumber   
\end{align}
Here, $\{\bpsi_i:=(\psi_{i}^{\uparrow},\psi_{i}^{\downarrow})^\top\}_{i=1}^N$ denote a set of orthonormal spinor orbitals, with $\psi_{i}^{\uparrow}$ and $\psi_{i}^{\downarrow}$ the spin-up and spin-down components, respectively; $\brho:=\begin{bsmallmatrix}\rho^{\uparrow,\uparrow} & \rho^{\uparrow,\downarrow}\\ \rho^{\downarrow,\uparrow} & \rho^{\downarrow,\downarrow}\end{bsmallmatrix}$ refers to the spin-density matrix, defined by $\brho=\sum_{i=1}^N\bpsi_i\bpsi_i^\top$; $E$ represents the Kohn-Sham energy functional in $\brho$ \cite{kohn1965self}\footnote{We consider the energy functional in density, allowing the use of all local and semi-local as well as some non-local exchange-correlation functionals. See the perspective \cite{burke2012perspective} for more details.}; $\{w_j^{\sigma,\sigma'}\}_{\sigma,\sigma'}$ are the weight functions associated with the $j$-th constraint and $N_j\in \R$ ($j=1,\ldots,m$). 
Such a formulation encompasses several practical situations \cite{gonze2022constrained,kaduk2012constrained}. For example, to constrain the charge of a spatial fragment, one may choose the weight functions such that $w_j^{\uparrow,\uparrow}=w_j^{\downarrow,\downarrow}=w_j$ and $w_j^{\uparrow,\downarrow}=w_j^{\downarrow,\uparrow}\equiv0$, where $w_j$ is a smoothed indicator function of the fragment. This yields a constraint on the local charge density, useful for characterizing charge-transfer states \cite{wu2005direct,wu2006constrained}. 
In noncollinear DFT, it is common to constrain local atomic magnetic moments in order to investigate energy landscapes of magnetic materials and to construct machine-learned surrogates \cite{ma2015constrained,yu2024physics,yu2024spin,zheng2026integrating}. This can be achieved by defining the weight functions using Pauli matrices and smoothed indicator functions localized around atoms. In these typical applications, $m$ is at most on the order of $\calO(N_{\rm nuc})$, with $N_{\rm nuc}\in\N$ the number of nuclei. We also note that the CDFT reduces to the DFT when $m$ equals zero.

\par The continuous formulation \eqref{eqn:CDFT continuous general} can be discretized upon choosing suitable basis sets; interested readers are referred to Appendix \ref{appsec:UKS functional}. 
The resulting optimization problem is of the form
\begin{equation}
    \min_{X}~f(XX^\top),~~\st~X\in\St_p(\R^n),~\trace(X^\top W_jX)=b_j,~~j=1,\ldots,m,
    \label{eqn:CDFT discrete general}
\end{equation}
which involves the Stiefel manifold $\St_p(\R^n):=\{X\in\R^{n\times p}: X^\top X=I_p\}$ together with $m$ additional nonconvex quadratic constraints defined by $W_j\in\S^n$ and $b_j\in\R$ ($j=1,\ldots,m$). For simplicity, we impose the following blanket assumption.
\begin{assumption}[Blanket assumption]\label{asp:LICQ}\mbox{}
    \begin{enumerate}[label={\rm(A\arabic*)}]
        \item Problem \eqref{eqn:CDFT discrete general} has at least one feasible point.

        \item\label{asp:LICQ-(2)} The (Riemannian) linear independence constraint qualification is fulfilled at any local minimizer $X$ of problem \eqref{eqn:CDFT discrete general}, namely, $\{(I_n-XX^\top)W_jX\}_{j=1}^m$ are linearly independent.
    \end{enumerate}
\end{assumption}

\begin{remark}
    Assumption \ref{asp:LICQ} \ref{asp:LICQ-(2)} implies the necessity of the Karush-Kuhn-Tucker (KKT) stationary conditions for any local minimizer of problem \eqref{eqn:CDFT discrete general} {\rm\cite{bergmann2019intrinsic}}. 
\end{remark}
\noindent 
In the context of CDFT, the matrix $X$, the objective function $f$, and the matrices $\{W_j\}_{j=1}^m$ are derived from the discretizations of spinor orbitals, energy functional, and weight functions, respectively. From the optimization point of view, the main difficulty in solving problem \eqref{eqn:CDFT discrete general} lies in the coupling of manifold constraints with additional quadratic constraints. If the latter are missing, problem \eqref{eqn:CDFT discrete general} becomes an optimization problem on the Stiefel manifold and a wide range of optimization algorithms are applicable; see \cite{absil2008optimization,hu2020brief} and the references therein. One could also perform the self-consistent field (SCF) fixed-point iteration, which is a conventional option in DFT calculations \cite{cances2023density,lin2019numerical}. The presence of quadratic constraints spoils such geometrical structure. 

\par There are mainly two classes of methods for solving problem \eqref{eqn:CDFT discrete general} in the CDFT community. The most straightforward one gets rid of the $m$ quadratic constraints by adding penalty terms to the objective function \cite{ma2015constrained}. With a fixed penalty parameter $\sigma>0$, the quadratic penalty problem, for instance,
\begin{equation}
    \min_X~f(XX^\top)+\sigma\sum_{j=1}^m\lrbracket{\trace(X^\top W_jX)-b_j}^2,~~\st~~X\in\St_p(\R^n)
    \label{eqn:CDFT quadratic penalty}
\end{equation}
could be solved by either optimization algorithms on the Stiefel manifold or by the SCF iteration mentioned above \cite{absil2008optimization,cances2023density,hu2020brief,lin2019numerical}. Such a methodology has been implemented by materials simulation software packages such as VASP \cite{kresse1996efficiency,kresse1996efficient}. However, since the exactness of penalty is not guaranteed\footnote{In fact, the exactness is not likely to hold; see our numerical results in Section \ref{subsec:inexactness of quadratic penalty}.}, one needs to choose a large penalty parameter to reduce the feasibility violation, while an excessively large penalty parameter can lead to numerical instability. 

\par The second class consists of double-loop methods that achieve feasibility by solving a sequence of simpler subproblems at each iteration \cite{cai2023self,gonze2022constrained,wu2005direct,zheng2026integrating}. The KKT conditions for problem \eqref{eqn:CDFT discrete general} are
\begin{align}
    F(X,\bmu)X:=\lrbracket{\nabla f(XX^\top)+\sum\nolimits_{j=1}^m\mu_jW_j}X=X\Sigma,\label{eqn:CDFT stationarity conditions nepv}\\[0.2cm]
    \trace(X^\top W_jX)=b_j,~~j=1,\ldots,m,\nonumber\\[0.2cm]
    X\in\St_p(\R^n),~~\Sigma\in\S^{p},~~\mu_j\in\R,~~j=1,\ldots,m,\nonumber
\end{align}
where $F(X,\bmu)$ is called the discretized effective Hamiltonian in related fields, $\bmu:=(\mu_1,\ldots,\mu_m)^{\top}$ and $\Sigma$ are introduced as the Lagrange multipliers corresponding to the quadratic and manifold constraints, respectively. Given $\bmu^{(k)}$, these methods first update $X^{(k)}$ to an intermediate iterate $\tilde X^{(k+1)}$ by taking the lowest  $p$ eigenvectors of $F(X^{(k)},\bmu^{(k)})$\footnote{In practical implementations, direct inversion in the iterative subspace (DIIS) extrapolation is often applied during the outer loop, further modifying the effective operator used to compute $\tilde X^{(k+1)}$ \cite{pulay1980convergence,pulay1982diis}.}, which may violate the quadratic constraints. The next iterate $(X^{(k+1)},\bmu^{(k+1)})$ is then obtained by approximately solving the subproblem 
$$\min_X~\trace\lrbracket{X^\top\nabla f\big(\tilde X^{(k+1)}(\tilde X^{(k+1)})^\top\big)X},~~\st~X\in\St_p(\R^n),~\trace(X^\top W_jX)=b_j,~~j=1,\ldots,m,$$
using a dual Newton method initialized at  $\bmu^{(k)}$. 
The Jacobian required in the inner Newton steps can be derived from first-order perturbation theory \cite{wu2005direct}. Note that the subproblem is constructed by freezing the nonlinear dependence of $f$ on $X$, 
which is progressively corrected by the outer-loop intermediate step. Such a double-loop (or nested)
methodology may avoid issues related to penalty methods, and has been widely integrated into off-the-shelf codes, e.g., \textsc{Siesta} \cite{garcia2020siesta} and \textsc{CP2K} \cite{kuhne2020cp2k}. Nevertheless, its theoretical properties remain largely unclear. In fact, due to the nonlinearity of $f$, equation \eqref{eqn:CDFT stationarity conditions nepv} does not necessarily imply that $X$ spans the lowest $p$-dimensional invariant subspace of $F(X,\bmu)$ (see \cite{liu2015analysis} for an example), whereas both $\tilde X^{(k+1)}$ and $X^{(k+1)}$ are computed via minimization strategies, resulting in a potential mismatch. 
There would be more subtleties when taking the varying $\bmu$ into account. Moreover, the notion of the ``lowest'' $p$-dimensional eigenspace is no longer uniquely defined if the eigengap condition $\lambda_{n-p}(F(X,\bmu))>\lambda_{n-p+1}(F(X,\bmu))$ does not hold, which can lead to oscillations in the iterate sequence. Finally, the overall efficiency and convergence depend critically on the performance of inner Newton steps. 
In summary, an efficient yet robust numerical algorithm for problem \eqref{eqn:CDFT discrete general} is still lacking. 

\medskip

\par\noindent\textbf{Contributions.} 
To address these gaps, the main contributions of this paper are threefold:
\begin{itemize}
    \item We propose a subspace-splitting reformulation of problem \eqref{eqn:CDFT discrete general} that exploits the inherent rotation invariance of CDFT to decouple the Stiefel manifold constraint from the nonconvex quadratic constraints. Based on this separable structure, we design a single-loop damped subspace splitting algorithm (DASSP) for the reformulated problem. 
    

    \item We establish the global convergence of DASSP to approximate KKT points under suitable conditions. To the best of our knowledge, our work provides the first algorithm for CDFT calculations with rigorous convergence guarantees, distinguishing it from the two existing classes of methods.

    \item We demonstrate through extensive experiments on realistic CDFT problems that DASSP achieves superior feasibility accuracy compared to quadratic penalty methods. Furthermore, DASSP exhibits higher computational efficiency than the double-loop method, particularly in systems where eigengap issues typically destabilize standard SCF iterations, without compromising robustness.
    
\end{itemize}

We note that the proposed DASSP can be viewed as a damped alternating direction method of multipliers (ADMM). The established theoretical convergence is also worth mentioning compared with recent advances on ADMM-type methods for nonconvex optimization problems, which we briefly review below.

\medskip

\par \noindent \textbf{Literature review related to ADMM.} Historically, ADMM dates back to the seminal works in the mid-1970s \cite{gabay1976dual,glowinski1975approximation} and is deeply rooted in the classical augmented Lagrangian methods (ALM) \cite{hestenes1969multiplier,powell1969method}. While the literature on ADMM and its variants for convex problems is voluminous (see recent surveys \cite{boyd2011distributed,han2022survey} and the references therein), the convergence theory for nonconvex and nonlinearly constrained problems is still much less complete. A central theoretical challenge in this context is to ensure the boundedness of the dual iterate sequence \cite{hallak2023adaptive}. This difficulty is further exacerbated when the constraints are allowed to be nonlinear, as in our case. 

\par Many existing analyses assume the boundedness directly \cite{bolte2018nonconvex,cohen2022dynamic,el2025convergence,hallak2023adaptive,hien2024inertial}. There are a few exceptions \cite{sun2024dual,zhu2024first}. However, their analyses rely on assumptions that are not aligned with our setting (cf. Eq. \eqref{eqn:CDFT discrete splitting} later): the objective is bounded from below and above \cite{sun2024dual} (or coercive and partially Lipschitz continuous \cite{zhu2024first}), the nonlinear constraints are Lipschitz continuous \cite{sun2024dual,zhu2024first} and possibly norm-bounded \cite{sun2024dual}, along with an additional image-inclusion condition on the constraints \cite{zhu2024first}. 
Another related line of work concerns Riemannian ADMM \cite{deng2025adaptive,kovnatsky2016madmm,lai2014splitting,li2025riemannian,zhang2020primal}, which leverages manifold structures explicitly but assumes that any non-manifold constraints are linear and that either the objective is partially Lipschitz continuous or at least one variable block is unconstrained. In our analysis, we address the boundedness issue of the dual iterate sequence by adopting damped dual updates, combined with a careful construction of a surrogate sequence. 

\medskip

\par\noindent\textbf{Organization.} In Section \ref{sec:algorithmic developments}, we introduce the subspace-splitting reformulation of problem \eqref{eqn:CDFT discrete general} and develop the single-loop DASSP. In Section \ref{sec:convergence analysis}, we establish the convergence properties of DASSP. Numerical experiments on synthetic and realistic CDFT problems are reported in Section \ref{sec:numerical experiments}. Finally, Section \ref{sec:conclusions} concludes the paper and discusses future directions.

\medskip

\par\noindent\textbf{Notation.}
Let $\S^n$ be the set of $n\times n$ real symmetric matrices. For a matrix $A$, $A^{\top}$ denotes its transpose and $\trace{(A)}$ denotes its trace. For matrices $A$ and $B$ of the same size, we define the Frobenius inner product by $\inner{A,B}:=\trace{(A^{\top}B)}$, and denote by $\norm{A}_{\F}$ and $\norm{A}_{2}$ the Frobenius norm and spectral norm, respectively. For $A \in \S^n$, its eigenvalues are arranged as $\lambda_1(A)\geq \lambda_{2}(A)\geq \cdots \geq \lambda_n(A)$. The (real) Stiefel manifold is denoted by $\St_{p}(\R^n):=\{X\in \R^{n\times p}: X^{\top}X=I_p \}$. For $X\in \St_p(\R^n)$, we write $\mathrm{P}_{X}^{\perp}:=I_n-XX^{\top}$. The notation $\proj_{\mathcal{C}}(\cdot)$ stands for the Euclidean projection operator onto a closed set $\mathcal{C}$. 

\section{Algorithmic developments}\label{sec:algorithmic developments}




\par In this section, we develop the algorithmic framework for problem \eqref{eqn:CDFT discrete general}. By exploiting the inherent rotation invariance, we first derive a subspace-splitting reformulation that separates the Stiefel manifold constraint from the quadratic constraints. Based on this separable structure, we then propose a damped subspace splitting algorithm.

\par We have realized that the main difficulty in solving problem \eqref{eqn:CDFT discrete general} is due to the presence of two types of constraints for $X$. A natural idea is to introduce another block of variables and to treat the two groups separately using the alternating direction method of multipliers (ADMM) or its variants. 

\par There are various ways to split the variables, and our goal is to identify a strategy that facilitates the efficient solution of the resulting subproblems. To this end, we notice in problem \eqref{eqn:CDFT discrete general} the rotation invariance of both the objective function and the quadratic constraints, namely, they remain unchanged after replacing $X$ with $XQ$, where $Q\in\R^{p\times p}$ is any orthogonal matrix. In other words, problem \eqref{eqn:CDFT discrete general} depends only on the subspace ${\rm span}(X)$. This observation motivates us to introduce a matrix variable $Y\in\S^{n}$ representing the orthogonal projector onto ${\rm span}(X)$ directly, at the price of adding a subspace alignment constraint $Y=XX^\top$. We therefore have the following subspace-splitting reformulation of problem \eqref{eqn:CDFT discrete general}:
\begin{equation}\label{eqn:CDFT discrete splitting}
\min_{X,Y}~f(Y),~~\st~X\in\St_p(\R^{n}),~~Y\in \calY,~~Y=XX^{\top},
\end{equation}
where 
\begin{equation}
\calY:=\lrbrace{Y\in \S^n:\trace{(W_jY)}=b_j,~j=1,\ldots,m}.
\end{equation}
This reformulation decouples the two types of constraints in a clean way: the Stiefel manifold constraint remains entirely in the $X$-block, while the quadratic constraints become linear in the $Y$-block. 
We also note that subspace-splitting techniques have been employed in other settings, including distributed principal component analysis \cite{wang2023distributed,wang2024seeking} and sparse spectral clustering \cite{lu2018nonconvex}.

\par A straightforward application of ADMM with classical dual updates to problem \eqref{eqn:CDFT discrete splitting} would be problematic due to the introduced nonlinear constraints. As mentioned in Section \ref{sec:introduction}, their presence poses significant challenges when one attempts to show the boundedness of dual iterates \cite{hallak2023adaptive}. 
In this work, we propose a variant of ADMM that adopts a damped dual update strategy, called the \textit{damped subspace splitting algorithm} (DASSP).  

\par To begin with, we construct the damped augmented Lagrangian function:
\begin{equation}\label{eqn:augmented Lagrangian}
\calL_{\beta,\delta}(X,Y,\Lambda):=f(Y)+\inner{(1-\delta)\Lambda,Y-XX^{{\top}}}+\frac{\beta}{2}\norm{Y-XX^{\top}}_{\F}^{2},
\end{equation}
where $\Lambda\in \S^n$ is the Lagrange multiplier associated with the constraint $Y=XX^{\top}$, $\beta>0$ is the penalty parameter, and $\delta\in[0,1)$ is the damping parameter. 
Note that the damped augmented Lagrangian function \eqref{eqn:augmented Lagrangian} coincides with the standard one when $\delta=0$. The proposed DASSP updates the variables $X$, $Y$, and $\Lambda$ alternately.

\medskip

\par\noindent\textbf{$X$-update.} First, for fixed $Y^{(k)}$ and $\Lambda^{(k)}$, the subproblem for updating $X$ is given by
\begin{equation}\label{eqn:X subproblem}
X^{(k+1)}  := \argmin_{X\in\St_p(\R^{n})}~~\calL_{\beta,\delta}(X,Y^{(k)},\Lambda^{(k)})= \argmin_{X\in\St_p(\R^{n})}~~ -\trace{\lrbracket{X^{\top}A^{(k)}X}},
\end{equation}
where
\begin{equation}
    A^{(k)}:=Y^{(k)}+\frac{1-\delta}{\beta}\Lambda^{(k)}\in\S^n.
    \label{eqn:tmp matrix}
\end{equation}
In essence, the $X$-subproblem \eqref{eqn:X subproblem} is an eigenvalue problem of computing an orthonormal basis of the dominant $p$-dimensional subspace of $A^{(k)}$. This task can be accomplished by direct or iterative eigensolvers \cite{golub2013matrix}. 

\medskip

\par\noindent\textbf{$Y$-update.} Next, for given $X^{(k+1)}$, $Y^{(k)}\in\calY$, and $\Lambda^{(k)}$, we consider the following subproblem over the affine set $\calY$:
\begin{equation}\label{eqn:Y subproblem}
\begin{aligned}
    &~\min_{Y\in \calY}~~ \calL_{\beta,\delta}(X^{(k+1)},Y,\Lambda^{(k)})\\
    =&~\min_{Y\in\calY}~~f(Y)+\inner{(1-\delta)\Lambda^{(k)},Y-X^{(k+1)}(X^{(k+1)})^\top}+\frac{\beta}{2}\norm{Y-X^{(k+1)}(X^{(k+1)})^\top}_{\F}^{2}.
\end{aligned}   
\end{equation}
Due to the fact that $f$ could be highly nonlinear, the exact solution to problem \eqref{eqn:Y subproblem} is out of reach in general. Instead, we perform a single projected gradient step. To this end, let
\begin{equation*}
\calT_{\calY}:=\lrbrace{Z\in \S^n: \trace{(W_jZ)}=0,~~j=1,\ldots,m}
\end{equation*}
be the tangent space of the affine set $\calY$. Then the update of $Y^{(k+1)}$ takes the form
\begin{equation}
Y^{(k+1)}:=Y^{(k)}-\eta_k \proj_{\calT_{\calY}}\lrbracket{\nabla f(Y^{(k)})+(1-\delta)\Lambda^{(k)}+\beta(Y^{(k)}-X^{(k+1)}(X^{(k+1)})^\top)},
\label{eqn:Y update}
\end{equation}
where $\eta_k>0$ is the stepsize,  $\proj_{\calT_{\calY}}:\S^n\to \calT_{\calY}$ is the projection operator onto $\calT_{\calY}$, which admits a closed form: 
\begin{equation*}
\proj_{\calT_{\calY}}(S):=S+\sum_{j=1}^{m}\hat\mu_jW_j,\quad\forall~S\in\S^n,
\end{equation*}
where the coefficients $\hat{\bm{\mu}}:=(\hat\mu_1,\ldots,\hat\mu_m)^{\top}\in\R^m$ are determined by the linear system
\begin{equation}\label{eqn:projection linear system}
M\hat{\bm{\mu}}=-\bm{c},\quad \text{with}~~M_{ij}:=\trace{(W_iW_j)},~~c_i:=\trace{(W_iS)},~~i,j=1,\ldots,m.
\end{equation}
The invertibility of the matrix $M$ is implied by Assumption \ref{asp:LICQ}~\ref{asp:LICQ-(2)}. The computational cost of projection is negligible whenever $m$ is small.

\medskip

\par\noindent\textbf{$\Lambda$-update.} Finally, the Lagrange multiplier $\Lambda$ is updated in a damped dual ascent manner:
\begin{equation}\label{eqn:damped dual update}
\Lambda^{(k+1)}:=(1-\delta)\Lambda^{(k)}+\tau\beta\lrbracket{Y^{(k+1)}-X^{(k+1)}(X^{(k+1)})^\top},
\end{equation}
where $\tau\in (0,1]$ is an under-relaxation parameter. The presence of the damping parameter $\delta$ is a key feature of the proposed algorithm and plays crucial roles in both algorithmic and theoretical senses. First, the damped dual update \eqref{eqn:damped dual update} can be interpreted as a regularized version of the classical dual ascent step. 
Second,  bounding the dual sequence is often the central obstacle in the convergence analysis for nonconvex ADMM-type methods. Intuitively, the damping strategy introduces a contraction effect in equation \eqref{eqn:damped dual update}, making the dual iterates much easier to control. It also contributes to the construction of a surrogate sequence in the convergence analysis. We note that the damped dual update has recently been used in the ALM literature \cite{hajinezhad2019perturbed,kong2023accelerated,melo2020iteration} and the ADMM literature \cite{kong2024global,yang2017alternating,zhou2025perturbed} on linearly constrained nonconvex problems. The parameter $\tau$ is introduced due to technical reasons and is also observed to be important for numerical stability. Please refer to Sections \ref{sec:convergence analysis} and \ref{sec:numerical experiments} for more details. 

\medskip

\par We outline the proposed DASSP in Algorithm \ref{algn:DP-SSA}.


\normalem
\begin{algorithm}[!t]
        \caption{Damped subspace splitting algorithm (DASSP).}
        \label{algn:DP-SSA}
    \KwIn{Penalty parameter $\beta>0$, under-relaxation parameter $\tau\in(0,1]$,  damping parameter $\delta\in[0,1)$, stepsizes $\{\eta_k>0\}$, initial points $Y^{(0)}\in\S^n$, $\Lambda^{(0)}\in \S^n$.}

    Set $k:=0$.

    \lIf{$Y^{(0)}\notin \calY$}{project $Y^{(0)}$ onto the affine set $\calY$, i.e., $Y^{(0)}:=\proj_{\calY}(Y^{(0)})$}
    
    \While{the stopping criterion is not satisfied}{
Update $X^{(k+1)}$ by solving problem \eqref{eqn:X subproblem} with a direct or iterative eigensolver.

Update $Y^{(k+1)}$ by a single projected gradient step \eqref{eqn:Y update} with the stepsize $\eta_k$.

Update $\Lambda^{(k+1)}$ by damped dual ascent \eqref{eqn:damped dual update}.

Set $k:=k+1$.
    }
\KwOut{$X^{(k)}\in \St_p(\R^n)$.}
\end{algorithm}

\section{Convergence analysis}\label{sec:convergence analysis}


\par In this section, we establish that every accumulation point of the iterate sequence generated by DASSP is an approximate KKT point of problem \eqref{eqn:CDFT discrete general}, defined as follows.

\begin{definition}
For a given $\eps>0$, we say that $X\in \R^{n\times p}$ is an $\eps$-approximate KKT point for problem \eqref{eqn:CDFT discrete general} if there exist multipliers $\Sigma\in \S^p$ and $\bmu\in\R^m$ such that
\begin{equation*}
\max\lrbrace{\norm{F(X,\bmu)X-X\Sigma}_{\F},\norm{X^{\top}X-I_p}_{\F},\max_{1\leq j\leq m}\abs{\trace{(X^{\top}W_jX)}-b_j}}\leq \eps,
\end{equation*}
where $F(X,\bmu)$ is defined in equation \eqref{eqn:CDFT stationarity conditions nepv}. 
\end{definition}

\par We now list the assumptions for the convergence analysis.

\begin{assumption}\label{assumption}
Given the damping parameter $\delta\in(0,1)$, let $c$ be a constant satisfying $c>2(1-\delta)(2-\delta)/\delta$. 
\begin{enumerate}[label={\rm(A\arabic*)}]
\item\label{asp:lipschitz smooth} The objective function $f:\S^n\to \R$ is continuously differentiable and $L$-smooth, i.e., there exists an $L>0$ such that
$$
\norm{\nabla f(U)-\nabla f(V)}_{\F}\leq L\norm{U-V}_{\F}, \quad \forall~ U,V\in \S^n.
$$
\item\label{asp:eigengap} The sequence $\{A^{(k)}\}_{k\geq 0}$ defined in equation \eqref{eqn:tmp matrix} has a uniform eigengap, i.e., there exists a $\gamma>0$ such that
$$
\gamma_k:=\lambda_{p}(A^{(k)})-\lambda_{p+1}(A^{(k)})\geq \gamma, \quad \forall~k\in \N.
$$
\item\label{asp:penalty param} The penalty parameter $\beta$ satisfies $\beta>2(L+1)$, where $L$ is defined in \ref{asp:lipschitz smooth}.
\item\label{asp:step size} The stepsize $\eta_k\equiv\eta$ satisfies $\eta \in (0,2/(\beta+L))$.

\item\label{asp:under relaxation param} The under-relaxation parameter $\tau$ satisfies
\begin{equation*}
\tau \in \lrbracket{0,\frac{\delta}{2c}\min\lrbrace{\frac{1}{\beta}\lrbracket{\frac{1}{\eta}-\frac{\beta+L}{2}},\frac{\gamma}{n}}},
\end{equation*}
where $\gamma$ is defined in \ref{asp:eigengap}.
\end{enumerate}
\end{assumption}
\begin{remark}
{\rm{Assumption \ref{assumption}~\ref{asp:eigengap}}} is a standard condition commonly used in the literature to guarantee the well-posedness of the linear eigenvalue problem \eqref{eqn:X subproblem} throughout the iterations {\rm{\cite{bai2022sharp,liu2014convergence,liu2015analysis}}}. Although we cannot remove it in the current analysis, we remark that it holds when $Y^{(k)}$ and $\Lambda^{(k)}$ are sufficiently close to fixed matrices $\bar X\bar X^\top\in\S^n$ and $\bar\Lambda\in\S^n$ with $\bar X\in\St_p(\R^n)$, and $\beta$ is sufficiently large; this can be shown by using the perturbation theory of matrix eigenvalues \cite{golub2013matrix}. We also provide supporting numerical evidence in Section \ref{subsec:charge-constrained DFT}. In {\rm Assumption \ref{assumption}~\ref{asp:step size}}, the stepsize is assumed to be constant for simplicity. An extension of our analysis to cases with variable stepsizes $\eta_k\in[\varepsilon_0,2/(\beta+L)-\varepsilon_0]$ for any fixed $\varepsilon_0>0$ would be straightforward but tedious.
\end{remark}

Let $\{(X^{(k)},Y^{(k)},\Lambda^{(k)})\}$ be the sequence generated by Algorithm \ref{algn:DP-SSA}. Throughout this section, we denote the subspace alignment violation by 
\begin{equation}
    R^{(k)}:=Y^{(k)}-X^{(k)}(X^{(k)})^\top\in\S^n. 
    \label{eqn:subspace alignment violation}
\end{equation}
We first quantify the decrease in the damped augmented Lagrangian induced by primal updates.
\begin{lemma}\label{lemma:decrease in X step}
Suppose that {\rm Assumption \ref{assumption}~\ref{asp:eigengap}} holds. Then, for every $k\geq 0$,
\begin{equation*}
\calL_{\beta,\delta}(X^{(k+1)},Y^{(k)},\Lambda^{(k)})-\calL_{\beta,\delta}(X^{(k)},Y^{(k)},\Lambda^{(k)})\leq -\frac{\beta\gamma}{n}\norm{X^{(k+1)}(X^{(k+1)})^\top-X^{(k)}(X^{(k)})^\top}_{\F}^{2}.
\end{equation*}
\end{lemma}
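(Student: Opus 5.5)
Since $\Lambda^{(k)}$ and $Y^{(k)}$ are fixed, the difference of the damped augmented Lagrangians involves only the terms depending on $XX^\top$. Write $P:=X^{(k+1)}(X^{(k+1)})^\top$ and $P_0:=X^{(k)}(X^{(k)})^\top$; both are rank-$p$ orthogonal projectors. Expanding $\norm{Y^{(k)}-XX^\top}_\F^2=\norm{Y^{(k)}}_\F^2-2\inner{Y^{(k)},XX^\top}+p$, the term $\norm{XX^\top}_\F^2=p$ is constant on $\St_p(\R^n)$. Hence, with $A^{(k)}$ from \eqref{eqn:tmp matrix},
\begin{equation*}
\calL_{\beta,\delta}(X^{(k+1)},Y^{(k)},\Lambda^{(k)})-\calL_{\beta,\delta}(X^{(k)},Y^{(k)},\Lambda^{(k)})=-\beta\inner{A^{(k)},P-P_0}.
\end{equation*}
This reduces the claim to the spectral inequality $\inner{A^{(k)},P-P_0}\geq \frac{\gamma}{n}\norm{P-P_0}_\F^2$. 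Here $P$ is the projector onto the dominant $p$-dimensional eigenspace of $A^{(k)}$, which is uniquely defined by Assumption \ref{assumption}~\ref{asp:eigengap}.

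To prove this inequality, diagonalize $A^{(k)}=\sum_i\lambda_i u_iu_i^\top$ and write $P=\sum_{i\le p}u_iu_i^\top$. Since $\trace(P-P_0)=0$, we may shift $A^{(k)}$ by any multiple of $I_n$. Choose the shift $c=(\lambda_p+\lambda_{p+1})/2$ and set $B:=A^{(k)}-cI_n$. Then $B=B_+-B_-$, where $B_+\succeq\frac{\gamma_k}{2}P$ is supported on $\mathrm{range}(P)$ and $B_-\succeq\frac{\gamma_k}{2}(I-P)$ is supported on the complement. This gives
\begin{equation*}
\inner{B,P-P_0}=\inner{B_+,P-P_0}+\inner{B_-,P_0}\ge\frac{\gamma_k}{2}\bigl(\trace(P)-\inner{P,P_0}+\inner{I-P,P_0}\bigr).
\end{equation*}
The first inequality uses $\inner{B_+,P-P_0}=\inner{B_+,P(I-P_0)P}\ge\frac{\gamma_k}{2}\trace(P(I-P_0)P)$, since $P(I-P_0)P\succeq0$ and $B_+-\frac{\gamma_k}{2}P\succeq 0$ on $\mathrm{range}(P)$. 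The second uses $\inner{B_-,P_0}=\inner{B_-,(I-P)P_0(I-P)}$ in the same way.

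The bracket equals $p-\inner{P,P_0}+p-\inner{P,P_0}=2p-2\inner{P,P_0}=\norm{P-P_0}_\F^2$. Therefore $\inner{A^{(k)},P-P_0}\ge\frac{\gamma_k}{2}\norm{P-P_0}_\F^2\ge\frac{\gamma}{2}\norm{P-P_0}_\F^2$. This is at least as strong as the stated constant $\gamma/n$ whenever $n\ge2$, and the case $n=1$ is trivial since then $p=1$ and $P=P_0$.

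The main obstacle is the careful bookkeeping in the spectral inequality, namely the compression argument that uses positive semidefiniteness of $P(I-P_0)P$ and $(I-P)P_0(I-P)$. The reduction of the Lagrangian difference is routine. A fallback, if the compression argument proves awkward, is to use the principal-angle (CS decomposition) form of $P-P_0$ and bound the gain angle-by-angle by $\gamma_k\sin^2\theta_i$. The weaker constant $\gamma/n$ in the statement suggests the authors used a cruder norm comparison of this kind, so matching it loosely suffices.
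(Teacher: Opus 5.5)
Your proof is correct, but it establishes the key spectral inequality by a different route from the paper. Both arguments begin with the same reduction to $-\beta\inner{A^{(k)},P-P_0}$. The paper then cites \cite[Lemma 3.1]{liu2014convergence}, applied to $-A^{(k)}$, to get $\inner{A^{(k)},P-P_0}\ge\gamma\norm{P-P_0}_2^2$. It then converts to the Frobenius norm via $\norm{P-P_0}_\F^2\le n\norm{P-P_0}_2^2$, which is exactly the cruder norm comparison you guessed lies behind the factor $1/n$.

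You instead prove the inequality from first principles, and each step checks:
\begin{itemize}
\item the shift by $(\lambda_p+\lambda_{p+1})/2$ is legitimate because $\trace(P-P_0)=0$;
\item $B_+-\frac{\gamma_k}{2}P$ and $B_--\frac{\gamma_k}{2}(I_n-P)$ are positive semidefinite, since their eigenvalues are $\lambda_i-\lambda_p\ge 0$ for $i\le p$ and $\lambda_{p+1}-\lambda_i\ge0$ for $i>p$;
\item the compressions $P(I_n-P_0)P$ and $(I_n-P)P_0(I_n-P)$ are positive semidefinite;
\item the bracket equals $2p-2\inner{P,P_0}=\norm{P-P_0}_\F^2$.
\end{itemize}

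Your route gives a self-contained argument and a better constant. In terms of principal angles, you obtain $\gamma\sum_i\sin^2\theta_i$, whereas the paper's intermediate bound is $\gamma\max_i\sin^2\theta_i$. Your resulting constant $\gamma/2$ is independent of $n$ and sharp: equality holds for $p=1$ and $A^{(k)}=\mathrm{diag}(1,0,\ldots,0)$. Carried through the analysis, it would let $\gamma/n$ be replaced by $\gamma/2$ both in $c_2$ and in the admissible range of $\tau$ in Assumption~\ref{assumption}~\ref{asp:under relaxation param}. The paper's route gains only brevity.

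Your separate treatment of $n=1$ is harmless but unnecessary. Assumption~\ref{assumption}~\ref{asp:eigengap} already presupposes that $\lambda_{p+1}$ exists, so $n\ge p+1\ge 2$.
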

\begin{proof}
A direct calculation yields
\begin{align*}
&~\calL_{\beta,\delta}(X^{(k+1)},Y^{(k)},\Lambda^{(k)})-\calL_{\beta,\delta}(X^{(k)},Y^{(k)},\Lambda^{(k)})\\
=&~-\beta\lrbracket{\trace{((X^{(k+1)})^\top A^{(k)}X^{(k+1)})}-\trace{((X^{(k)})^{\top}A^{(k)}X^{(k)})}}.
\end{align*}
Since $X^{(k)},X^{(k+1)}\in\St_p(\R^n)$, applying \cite[Lemma 3.1]{liu2014convergence} to $-A^{(k)}$, and using \rm{Assumption \ref{assumption}~\ref{asp:eigengap}}, we obtain
\begin{align*}
\calL_{\beta,\delta}(X^{(k+1)},Y^{(k)},\Lambda^{(k)})-\calL_{\beta,\delta}(X^{(k)},Y^{(k)},\Lambda^{(k)})&\leq -\beta\gamma\norm{X^{(k+1)}(X^{(k+1)})^\top-X^{(k)}(X^{(k)})^\top}_{2}^{2}\\
&\leq -\frac{\beta\gamma}{n}\norm{X^{(k+1)}(X^{(k+1)})^\top-X^{(k)}(X^{(k)})^\top}_{\F}^{2},
\end{align*}
which completes the proof.
\end{proof}
\begin{lemma}\label{lemma:decrease in Y  step}
Suppose that {\rm Assumption \ref{assumption}~\ref{asp:lipschitz smooth}} and \ref{asp:step size} hold. Then, for every $k\geq 0$,
\begin{equation*}
\calL_{\beta,\delta}(X^{(k+1)},Y^{(k+1)},\Lambda^{(k)})-\calL_{\beta,\delta}(X^{(k+1)},Y^{(k)},\Lambda^{(k)})\leq-\lrbracket{\frac{1}{\eta}-\frac{\beta+L}{2}}\norm{Y^{(k+1)}-Y^{(k)}}_{\F}^{2}.
\end{equation*}
\end{lemma}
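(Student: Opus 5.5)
The plan is the standard descent lemma for a projected gradient step on an $L$-smooth function restricted to an affine set. Fix $X^{(k+1)}$ and $\Lambda^{(k)}$, and write
\[
\phi(Y):=\calL_{\beta,\delta}(X^{(k+1)},Y,\Lambda^{(k)}).
\]
By Assumption \ref{assumption}~\ref{asp:lipschitz smooth}, $\phi$ is the sum of the $L$-smooth function $f$, a linear term, and the quadratic $\frac{\beta}{2}\|Y-X^{(k+1)}(X^{(k+1)})^\top\|_{\F}^2$, whose gradient is $\beta$-Lipschitz. Hence $\nabla\phi$ is $(L+\beta)$-Lipschitz, and
\[
\nabla\phi(Y^{(k)})=\nabla f(Y^{(k)})+(1-\delta)\Lambda^{(k)}+\beta\bigl(Y^{(k)}-X^{(k+1)}(X^{(k+1)})^\top\bigr),
\]
which is exactly the matrix projected in \eqref{eqn:Y update}. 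Setting $G:=\proj_{\calT_{\calY}}(\nabla\phi(Y^{(k)}))$, the update reads $Y^{(k+1)}=Y^{(k)}-\eta G$.

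The first step applies the descent lemma for $(L+\beta)$-smooth functions:
\[
\phi(Y^{(k+1)})\le \phi(Y^{(k)})+\inner{\nabla\phi(Y^{(k)}),Y^{(k+1)}-Y^{(k)}}+\frac{L+\beta}{2}\norm{Y^{(k+1)}-Y^{(k)}}_{\F}^2 .
\]
The second step evaluates the inner product. Since $Y^{(k+1)}-Y^{(k)}=-\eta G\in\calT_{\calY}$ and $\proj_{\calT_{\calY}}$ is the orthogonal projection onto a linear subspace of $\S^n$ (self-adjoint and idempotent), we have
\[
\inner{\nabla\phi(Y^{(k)}),-\eta G}=-\eta\inner{G,G}=-\frac{1}{\eta}\norm{Y^{(k+1)}-Y^{(k)}}_{\F}^2 .
\]
Substituting this into the descent inequality gives the claim with constant $\frac1\eta-\frac{\beta+L}{2}$. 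Assumption \ref{asp:step size} makes this constant positive, although positivity is not needed for the inequality itself.

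There is no real obstacle. The only points to verify are that the projection in \eqref{eqn:Y update} is the orthogonal (Frobenius) projection, so that $\inner{S,\proj_{\calT_{\calY}}(S)}=\|\proj_{\calT_{\calY}}(S)\|_{\F}^2$, and that $\calT_{\calY}$ is a linear subspace. Both follow from the closed form via the normal equations \eqref{eqn:projection linear system}: $S+\sum_j\hat\mu_jW_j$ is orthogonal to $\calT_{\calY}$'s complement $\Span\{W_j\}$ residual condition. The feasibility $Y^{(k)}\in\calY$ plays no role in this inequality.
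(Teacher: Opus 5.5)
Your proof is correct and follows the paper's argument essentially verbatim. Both use the $(\beta+L)$-Lipschitz gradient of $\calL_{\beta,\delta}(X^{(k+1)},\cdot,\Lambda^{(k)})$, the descent lemma, and the identity $\inner{\nabla_Y\calL_{\beta,\delta}(X^{(k+1)},Y^{(k)},\Lambda^{(k)}),-\eta G^{(k)}}=-\eta\snorm{G^{(k)}}_{\F}^2$, which holds because $G^{(k)}$ is the orthogonal projection onto the subspace $\calT_{\calY}$. You also check explicitly that the closed-form projection is orthogonal: its output lies in $\calT_{\calY}$ and the correction $\sum_j\hat\mu_jW_j$ lies in $\Span\{W_j\}=\calT_{\calY}^{\perp}$. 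The paper leaves this implicit, but your last sentence stating it is garbled and should say exactly this.
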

\begin{proof}
For fixed $X^{(k+1)}$ and $\Lambda^{(k)}$, the gradient of the damped augmented Lagrangian  with respect to $Y$ is given by
\begin{equation*}
\nabla_{Y}\calL_{\beta,\delta}(X^{(k+1)},Y,\Lambda^{(k)})=\nabla f(Y)+(1-\delta)\Lambda^{(k)}+\beta(Y-X^{(k+1)}(X^{(k+1)})^\top),
\end{equation*}
which is $(\beta+L)$-Lipschitz continuous by Assumption \ref{assumption}~\ref{asp:lipschitz smooth}. Let
\begin{equation}\label{eqn:projected gradient}
G^{(k)}:=\proj_{\calT_{\calY}}\lrbracket{\nabla f(Y^{(k)})+(1-\delta)\Lambda^{(k)}+\beta(Y^{(k)}-X^{(k+1)}(X^{(k+1)})^\top)}.
\end{equation}
Then the $Y$-update gives $Y^{(k+1)}-Y^{(k)}=-\eta G^{(k)}\in \calT_{\calY}$.  By the standard descent lemma,
\begin{equation*}
\begin{aligned}
&~\calL_{\beta,\delta}(X^{(k+1)},Y^{(k+1)},\Lambda^{(k)})-\calL_{\beta,\delta}(X^{(k+1)},Y^{(k)},\Lambda^{(k)})\\
\leq &~\inner{\nabla_{Y}\calL_{\beta,\delta}(X^{(k+1)},Y^{(k)},\Lambda^{(k)}),Y^{(k+1)}-Y^{(k)}}+\frac{\beta+L}{2}\norm{Y^{(k+1)}-Y^{(k)}}_{\F}^{2}\\
= &~-\frac{1}{\eta}\norm{Y^{(k+1)}-Y^{(k)}}_{\F}^{2}+\frac{\beta+L}{2}\norm{Y^{(k+1)}-Y^{(k)}}_{\F}^{2}=-\lrbracket{\frac{1}{\eta}-\frac{\beta+L}{2}}\norm{Y^{(k+1)}-Y^{(k)}}_{\F}^{2}.
\end{aligned} 
\end{equation*}
This completes the proof.
\end{proof}
Next, we turn to the behavior of the dual updates. To facilitate the analysis, we introduce an auxiliary sequence
\begin{equation}\label{eqn:pre-surrogate sequence}
T_k:=\calL_{\beta,\delta}(X^{(k)},Y^{(k)},\Lambda^{(k)})-\frac{\delta(1-\delta)}{2\tau\beta}\snorm{\Lambda^{(k)}}_{\F}^{2}.
\end{equation}
The following lemma provides an upper bound for $T_{k+1}-T_k$. 
\begin{lemma}\label{lemma:warm-up Lyapunov}
Suppose that {\rm Assumption \ref{assumption}~\ref{asp:lipschitz smooth}, \ref{asp:eigengap}, and \ref{asp:step size}} hold. Then, for every $k\geq 0$,
\begin{align*}
T_{k+1}-T_k\leq & -\lrbracket{\frac{1}{\eta}-\frac{\beta+L}{2}}\norm{Y^{(k+1)}-Y^{(k)}}_{\F}^{2}-\frac{\beta\gamma}{n}\norm{X^{(k+1)}(X^{(k+1)})^\top-X^{(k)}(X^{(k)})^\top}_{\F}^{2}\\
&+\frac{(1-\delta)(1-\frac{\delta}{2})}{\tau\beta}\norm{\Lambda^{(k+1)}-\Lambda^{(k)}}_{\F}^{2}.
\end{align*}
\end{lemma}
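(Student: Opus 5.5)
The plan is to split $T_{k+1}-T_k$ into a primal part, already handled by the two previous lemmas, and a dual part that I will compute exactly using the damped update \eqref{eqn:damped dual update}. Concretely, I will write
\begin{align*}
T_{k+1}-T_k=&~\big[\calL_{\beta,\delta}(X^{(k+1)},Y^{(k+1)},\Lambda^{(k+1)})-\calL_{\beta,\delta}(X^{(k+1)},Y^{(k+1)},\Lambda^{(k)})\big]\\
&+\big[\calL_{\beta,\delta}(X^{(k+1)},Y^{(k+1)},\Lambda^{(k)})-\calL_{\beta,\delta}(X^{(k+1)},Y^{(k)},\Lambda^{(k)})\big]\\
&+\big[\calL_{\beta,\delta}(X^{(k+1)},Y^{(k)},\Lambda^{(k)})-\calL_{\beta,\delta}(X^{(k)},Y^{(k)},\Lambda^{(k)})\big]\\
&-\frac{\delta(1-\delta)}{2\tau\beta}\lrbracket{\snorm{\Lambda^{(k+1)}}_{\F}^2-\snorm{\Lambda^{(k)}}_{\F}^2}.
\end{align*}
The second bracket is bounded by Lemma \ref{lemma:decrease in Y  step}, which uses \ref{asp:lipschitz smooth} and \ref{asp:step size}. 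The third bracket is bounded by Lemma \ref{lemma:decrease in X step}, which uses \ref{asp:eigengap}. Together they produce the two negative terms of the claim.

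For the first bracket, $\calL_{\beta,\delta}$ is linear in $\Lambda$. Hence this bracket equals $(1-\delta)\inner{\Lambda^{(k+1)}-\Lambda^{(k)},R^{(k+1)}}$ with $R^{(k+1)}$ as in \eqref{eqn:subspace alignment violation}. The dual update \eqref{eqn:damped dual update} gives
\[
\tau\beta R^{(k+1)}=\Lambda^{(k+1)}-(1-\delta)\Lambda^{(k)}=\Delta+\delta\Lambda^{(k)},
\qquad \Delta:=\Lambda^{(k+1)}-\Lambda^{(k)}.
\]
Therefore the first bracket equals
\[
\frac{1-\delta}{\tau\beta}\lrbracket{\snorm{\Delta}_{\F}^2+\delta\inner{\Delta,\Lambda^{(k)}}}.
\]

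The key step is to apply the polarization identity
\[
\inner{\Delta,\Lambda^{(k)}}=\tfrac12\lrbracket{\snorm{\Lambda^{(k+1)}}_{\F}^2-\snorm{\Lambda^{(k)}}_{\F}^2-\snorm{\Delta}_{\F}^2}.
\]
With it, the first bracket becomes
\[
\frac{(1-\delta)(1-\frac{\delta}{2})}{\tau\beta}\snorm{\Delta}_{\F}^2+\frac{\delta(1-\delta)}{2\tau\beta}\lrbracket{\snorm{\Lambda^{(k+1)}}_{\F}^2-\snorm{\Lambda^{(k)}}_{\F}^2}.
\]
The second term cancels exactly against the correction term in the definition \eqref{eqn:pre-surrogate sequence} of $T_k$. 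This cancellation is precisely why the coefficient $\delta(1-\delta)/(2\tau\beta)$ was chosen there.

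Summing the three pieces yields the stated inequality. There is no real obstacle: the only delicate point is the bookkeeping of the factors $(1-\delta)$ coming from the damped Lagrangian and from the damped dual update. I will verify that the surviving coefficient is exactly $(1-\delta)(1-\delta/2)/(\tau\beta)$.
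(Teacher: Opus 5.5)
Your proposal is correct and matches the paper's proof: the same three-bracket telescoping of $\calL_{\beta,\delta}$, with the primal brackets bounded by the two preceding lemmas. For the dual bracket, both use \eqref{eqn:primal residual in dual} to rewrite it as $\frac{(1-\delta)(1-\frac{\delta}{2})}{\tau\beta}\snorm{\Lambda^{(k+1)}-\Lambda^{(k)}}_{\F}^2$ plus a term that exactly cancels the correction in $T_k$. Your polarization-identity computation just spells out the ``direct expansion'' the paper leaves implicit, and the coefficients are right.
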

\begin{proof}
By direct calculation, we have
\begin{equation*}
\calL_{\beta,\delta}(X^{(k+1)},Y^{(k+1)},\Lambda^{(k+1)})-\calL_{\beta,\delta}(X^{(k+1)},Y^{(k+1)},\Lambda^{(k)})=(1-\delta)\inner{\Lambda^{(k+1)}-\Lambda^{(k)},R^{(k+1)}}.
\end{equation*}
Using the dual update \eqref{eqn:damped dual update}, we obtain 
\begin{equation}
R^{(k+1)}=\frac{1}{\tau\beta}\lrbracket{\Lambda^{(k+1)}-(1-\delta)\Lambda^{(k)}}.    
\label{eqn:primal residual in dual}
\end{equation}
Hence,
\begin{equation*}
(1-\delta)\inner{\Lambda^{(k+1)}-\Lambda^{(k)},R^{(k+1)}}=\frac{1-\delta}{\tau\beta}\inner{\Lambda^{(k+1)}-\Lambda^{(k)},\Lambda^{(k+1)}-(1-\delta)\Lambda^{(k)}}.
\end{equation*}
A direct expansion gives
\begin{align*}
&~(1-\delta)\inner{\Lambda^{(k+1)}-\Lambda^{(k)},R^{(k+1)}}-\frac{\delta(1-\delta)}{2\tau\beta}\lrbracket{\snorm{\Lambda^{(k+1)}}_{\F}^{2}-\snorm{\Lambda^{(k)}}_{\F}^{2}}\\
=&~\frac{(1-\delta)(1-\frac{\delta}{2})}{\tau\beta}\norm{\Lambda^{(k+1)}-\Lambda^{(k)}}_{\F}^{2}.
\end{align*}
Combining this identity with Lemmas \ref{lemma:decrease in X step} and \ref{lemma:decrease in Y  step} yields the desired inequality.
\end{proof}

The upper bound in Lemma \ref{lemma:warm-up Lyapunov} contains a positive term arising from the dual update, which cannot be easily controlled by telescoping arguments. To handle this, we exploit the dual update \eqref{eqn:damped dual update} to relate the dual differences to the primal differences.
\begin{lemma}\label{lemma:compensation inequality}
For any $\eps_X,\eps_Y>0$, and any $k\geq 1$, the following inequality holds   
\begin{align*}
\frac{1-\delta}{2\tau\beta}\norm{\Lambda^{(k+1)}-\Lambda^{(k)}}_{\F}^{2}\leq &~\frac{1-\delta}{2\tau\beta}\norm{\Lambda^{(k)}-\Lambda^{(k-1)}}_{\F}^{2}-\lrbracket{\frac{\delta}{2\tau\beta}-\frac{1}{2\eps_{Y}}-\frac{1}{2\eps_{X}}}\norm{\Lambda^{(k+1)}-\Lambda^{(k)}}_{\F}^{2}\\
&~+\frac{\eps_{Y}}{2}\norm{Y^{(k+1)}-Y^{(k)}}_{\F}^{2}+\frac{\eps_{X}}{2}\norm{X^{(k+1)}(X^{(k+1)})^\top-X^{(k)}(X^{(k)})^\top}_{\F}^{2}.
\end{align*} 
\end{lemma}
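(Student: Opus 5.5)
The plan is to compare two consecutive dual differences. Write $D_k:=\Lambda^{(k+1)}-\Lambda^{(k)}$ and $P^{(k)}:=X^{(k)}(X^{(k)})^\top$. Applying the damped dual update \eqref{eqn:damped dual update} at iterations $k$ and $k-1$ (this needs $k\geq 1$) and subtracting gives the recursion
\begin{equation*}
D_k=(1-\delta)D_{k-1}+\tau\beta\lrbracket{R^{(k+1)}-R^{(k)}},\qquad R^{(k+1)}-R^{(k)}=\lrbracket{Y^{(k+1)}-Y^{(k)}}-\lrbracket{P^{(k+1)}-P^{(k)}}.
\end{equation*}
I will take the Frobenius inner product of this identity with $D_k$ and divide by $\tau\beta$, which yields
\begin{equation*}
\frac{1}{\tau\beta}\norm{D_k}_{\F}^2=\frac{1-\delta}{\tau\beta}\inner{D_{k-1},D_k}+\inner{Y^{(k+1)}-Y^{(k)},D_k}-\inner{P^{(k+1)}-P^{(k)},D_k}.
\end{equation*}

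Next I bound each term on the right by Young's inequality. For the first term, $\frac{1-\delta}{\tau\beta}\inner{D_{k-1},D_k}\leq\frac{1-\delta}{2\tau\beta}\lrbracket{\norm{D_{k-1}}_{\F}^2+\norm{D_k}_{\F}^2}$. For the $Y$-term, $\inner{Y^{(k+1)}-Y^{(k)},D_k}\leq\frac{\eps_Y}{2}\norm{Y^{(k+1)}-Y^{(k)}}_{\F}^2+\frac{1}{2\eps_Y}\norm{D_k}_{\F}^2$. The $P$-term is handled the same way with $\eps_X$.

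Then I collect the $\norm{D_k}_{\F}^2$ terms on the left. Its coefficient becomes $\frac{1}{\tau\beta}-\frac{1-\delta}{2\tau\beta}=\frac{1+\delta}{2\tau\beta}$. I split this as $\frac{1-\delta}{2\tau\beta}+\frac{\delta}{\tau\beta}$. Moving $\frac{\delta}{\tau\beta}\norm{D_k}_{\F}^2$ and the two $\frac{1}{2\eps}$ terms to the right gives an inequality with coefficient $-\lrbracket{\frac{\delta}{\tau\beta}-\frac{1}{2\eps_Y}-\frac{1}{2\eps_X}}$ in front of $\norm{D_k}_{\F}^2$. This is stronger than the claimed one. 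The claim follows because $-\frac{\delta}{\tau\beta}\leq-\frac{\delta}{2\tau\beta}$, so I may weaken the coefficient to $\frac{\delta}{2\tau\beta}$.

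There is no real obstacle here. The only points needing care are the restriction $k\geq1$, so that $D_{k-1}$ is defined, and the bookkeeping of the coefficients. No assumption from Assumption \ref{assumption} is needed, since the argument uses only the dual update and Young's inequality.
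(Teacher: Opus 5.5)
Your proof is correct and is essentially the paper's argument: you difference the damped dual update at iterations $k$ and $k-1$, pair the result with $D_k=\Lambda^{(k+1)}-\Lambda^{(k)}$, and split the $Y$- and $XX^\top$-difference terms by Young's inequality with $\eps_Y,\eps_X$. The only deviation is the cross term $\inner{D_{k-1},D_k}$: the paper uses the polarization identity, drops $\norm{D_k-(1-\delta)D_{k-1}}_{\F}^2$ and then uses $(1-\delta)^2\le 1-\delta$, whereas your symmetric Young inequality gives the slightly sharper coefficient $\delta/(\tau\beta)$ instead of $\delta/(2\tau\beta)$, which you then correctly relax to the stated form.
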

\begin{proof}
From equation \eqref{eqn:primal residual in dual}, we obtain
\begin{equation*}
R^{(k+1)}-R^{(k)}=\frac{1}{\tau\beta}\lrbracket{(\Lambda^{(k+1)}-\Lambda^{(k)})-(1-\delta)(\Lambda^{(k)}-\Lambda^{(k-1)})}.
\end{equation*}
Taking inner products with $2\tau\beta(\Lambda^{(k+1)}-\Lambda^{(k)})$ on both sides gives
\begin{align*}
&~2\tau\beta\inner{\Lambda^{(k+1)}-\Lambda^{(k)},R^{(k+1)}-R^{(k)}}\\
=&~2\inner{\Lambda^{(k+1)}-\Lambda^{(k)},(\Lambda^{(k+1)}-\Lambda^{(k)})-(1-\delta)(\Lambda^{(k)}-\Lambda^{(k-1)})}\\
=&~\norm{(\Lambda^{(k+1)}-\Lambda^{(k)})-(1-\delta)(\Lambda^{(k)}-\Lambda^{(k-1)})}_{\F}^{2}+\norm{\Lambda^{(k+1)}-\Lambda^{(k)}}_{\F}^{2}-(1-\delta)^2\norm{\Lambda^{(k)}-\Lambda^{(k-1)}}_{\F}^{2}\\
\geq &~\norm{\Lambda^{(k+1)}-\Lambda^{(k)}}_{\F}^{2}-(1-\delta)^2\norm{\Lambda^{(k)}-\Lambda^{(k-1)}}_{\F}^{2}.
\end{align*}
After rearranging, we arrive at
\begin{equation}\label{eqn:compensation inequality with inner product}
\begin{aligned}
&~\frac{1-\delta}{2\tau\beta}\norm{\Lambda^{(k+1)}-\Lambda^{(k)}}_{\F}^{2}\\
\leq&~\frac{(1-\delta)^2}{2\tau\beta}\norm{\Lambda^{(k)}-\Lambda^{(k-1)}}_{\F}^{2}+\inner{\Lambda^{(k+1)}-\Lambda^{(k)},R^{(k+1)}-R^{(k)}}-\frac{\delta}{2\tau\beta}\norm{\Lambda^{(k+1)}-\Lambda^{(k)}}_{\F}^{2}\\ 
\leq&~\frac{1-\delta}{2\tau\beta}\norm{\Lambda^{(k)}-\Lambda^{(k-1)}}_{\F}^{2}+\inner{\Lambda^{(k+1)}-\Lambda^{(k)},R^{(k+1)}-R^{(k)}}-\frac{\delta}{2\tau\beta}\norm{\Lambda^{(k+1)}-\Lambda^{(k)}}_{\F}^{2}.
\end{aligned}
\end{equation}
For arbitrary $\eps_X,\eps_Y>0$, it follows from Young's inequality that
\begin{equation*}
\inner{\Lambda^{(k+1)}-\Lambda^{(k)},Y^{(k+1)}-Y^{(k)}}\leq \frac{\eps_Y}{2}\norm{Y^{(k+1)}-Y^{(k)}}_{\F}^{2}+\frac{1}{2\eps_Y}\norm{\Lambda^{(k+1)}-\Lambda^{(k)}}_{\F}^{2}
\end{equation*}
and
\begin{align*}
&~-\inner{\Lambda^{(k+1)}-\Lambda^{(k)},X^{(k+1)}(X^{(k+1)})^\top-X^{(k)}(X^{(k)})^\top}\\
\leq&~\frac{\eps_X}{2}\norm{X^{(k+1)}(X^{(k+1)})^\top-X^{(k)}(X^{(k)})^\top}_{\F}^{2}+ \frac{1}{2\eps_X}\norm{\Lambda^{(k+1)}-\Lambda^{(k)}}_{\F}^{2}.
\end{align*}
Substituting the above inequalities into equation \eqref{eqn:compensation inequality with inner product}, we complete the proof.
\end{proof}

By invoking Lemma \ref{lemma:compensation inequality}, the positive term in Lemma \ref{lemma:warm-up Lyapunov} can be controlled after properly choosing parameters. Based on this, we construct the following surrogate sequence to monitor the behavior of the algorithm: 
\begin{equation}\label{eqn:Lyapunov function}
\Psi_k:=T_k+c\cdot\frac{1-\delta}{2\tau\beta}\norm{\Lambda^{(k)}-\Lambda^{(k-1)}}_{\F}^{2}+C_0,
\end{equation}
where $T_k$ is defined in equation \eqref{eqn:pre-surrogate sequence}, the constant $c$ is given in Assumption \ref{assumption}, and $C_0$ is defined as
\begin{equation*}
C_0:=\frac12\norm{\nabla f(0)}_{\F}^{2}+(L+1)p-f(0),
\end{equation*}
which ensures the lower boundedness of $\{\Psi_k\}$ (see Lemma \ref{lemma:Lyapunov lower bound}). The next lemma establishes a descent inequality for $\{\Psi_k\}$.
\begin{lemma}\label{lemma:Lyapunov decrease inequality}
Suppose that {\rm Assumption \ref{assumption}} holds. Then there exist constants $c_1,c_2,c_3>0$ such that 
\begin{equation}\label{eqn:Lyapunov decrease inequality}
\Psi_{k+1}-\Psi_k\leq -c_1\norm{Y^{(k+1)}-Y^{(k)}}_{\F}^{2}-c_2\norm{X^{(k+1)}(X^{(k+1)})^\top-X^{(k)}(X^{(k)})^\top}_{\F}^{2}-c_3\norm{\Lambda^{(k+1)}-\Lambda^{(k)}}_{\F}^{2}
\end{equation}
holds for all $k\geq 1$.
\end{lemma}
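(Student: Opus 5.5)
The plan is to combine Lemma \ref{lemma:warm-up Lyapunov} with $c$ times Lemma \ref{lemma:compensation inequality}, and then pick $\eps_X,\eps_Y$ so that every coefficient is negative. Write $a:=\frac1\eta-\frac{\beta+L}{2}$, which is positive by Assumption \ref{assumption}~\ref{asp:step size}. Write $\Delta_Y:=\snorm{Y^{(k+1)}-Y^{(k)}}_\F^2$, $\Delta_X:=\snorm{X^{(k+1)}(X^{(k+1)})^\top-X^{(k)}(X^{(k)})^\top}_\F^2$, and $\Delta_\Lambda^{k+1}:=\snorm{\Lambda^{(k+1)}-\Lambda^{(k)}}_\F^2$. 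The constant $C_0$ cancels, so by definition \eqref{eqn:Lyapunov function},
\begin{equation*}
\Psi_{k+1}-\Psi_k=(T_{k+1}-T_k)+c\cdot\frac{1-\delta}{2\tau\beta}\lrbracket{\Delta_\Lambda^{k+1}-\Delta_\Lambda^{k}}.
\end{equation*}
Lemma \ref{lemma:compensation inequality} holds for every $k\ge1$, which is why the statement is restricted to $k\geq 1$.

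First I bound $T_{k+1}-T_k$ by Lemma \ref{lemma:warm-up Lyapunov}. This gives $-a\Delta_Y-\frac{\beta\gamma}{n}\Delta_X+\frac{(1-\delta)(2-\delta)}{2\tau\beta}\Delta_\Lambda^{k+1}$. Next I bound the bracketed $c$-term by $c$ times Lemma \ref{lemma:compensation inequality}; the $\Delta_\Lambda^k$ terms cancel exactly. Collecting terms yields
\begin{align*}
\Psi_{k+1}-\Psi_k\le&-\lrbracket{a-\frac{c\eps_Y}{2}}\Delta_Y-\lrbracket{\frac{\beta\gamma}{n}-\frac{c\eps_X}{2}}\Delta_X\\
&-\lrbracket{\frac{c\delta-(1-\delta)(2-\delta)}{2\tau\beta}-\frac{c}{2\eps_Y}-\frac{c}{2\eps_X}}\Delta_\Lambda^{k+1}.
\end{align*}
The condition $c>2(1-\delta)(2-\delta)/\delta$ in Assumption \ref{assumption} gives $c\delta-(1-\delta)(2-\delta)>c\delta/2$. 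So it suffices to make $\frac{c}{2\eps_Y}+\frac{c}{2\eps_X}$ strictly smaller than $\frac{c\delta}{4\tau\beta}$, while keeping $c\eps_Y<2a$ and $c\eps_X<2\beta\gamma/n$.

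I would choose $\eps_Y:=\theta a/c$ and $\eps_X:=\theta\beta\gamma/(nc)$ with a fixed $\theta\in(0,2)$. This gives $c_1=(1-\theta/2)a$ and $c_2=(1-\theta/2)\beta\gamma/n$. The $\Lambda$-requirement then reads $\frac{c^2}{\theta}\lrbracket{\frac1a+\frac{n}{\beta\gamma}}<\frac{c\delta}{2\tau\beta}$. The bound on $\tau$ in \ref{asp:under relaxation param} gives $\frac1a<\frac{\delta}{2c\tau\beta}$ and $\frac{n}{\beta\gamma}<\frac{\delta}{2c\tau\beta}$. Hence the left-hand side is below $\frac{c\delta}{\theta\tau\beta}$.

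The main obstacle is exactly this constant bookkeeping. The crude bound above wins only if $\theta$ is large enough, namely $\theta\ge2$, and that is incompatible with $\theta<2$. So I expect to have to exploit the unused slack. One source is the strictness of the $\tau$ bound, which can be quantified by a fixed ratio $\tau/\tau_{\max}<1$. The other is the margin $c\delta-(1-\delta)(2-\delta)$ beyond $c\delta/2$. Together they let me choose $\theta$ close to $2$ so that all three coefficients are positive. Failing that, I would retain the discarded nonnegative square in the proof of Lemma \ref{lemma:compensation inequality} for extra room, or apply Young's inequality with asymmetric weights tuned to the two branches of the minimum in \ref{asp:under relaxation param}. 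Once admissible $\eps_X,\eps_Y$ are fixed, $c_1,c_2,c_3$ are explicit positive constants depending only on $(L,\beta,\eta,\gamma,n,\delta,\tau,c)$, which proves \eqref{eqn:Lyapunov decrease inequality}.
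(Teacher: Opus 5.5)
\textbf{Verdict: the approach is right, but the proof is unfinished at its only nontrivial step.}

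Your decomposition of $\Psi_{k+1}-\Psi_k$ matches the paper. So does combining Lemma \ref{lemma:warm-up Lyapunov} with $c$ times Lemma \ref{lemma:compensation inequality}, and your collected inequality is correct. The gap is that you never exhibit admissible $\eps_X,\eps_Y$. Your explicit choice $\eps_Y=\theta a/c$, $\eps_X=\theta\beta\gamma/(nc)$ does not close under the crude bound, as you note yourself. What follows is a hedged list of possible remedies. Since choosing these two numbers is the whole content of the lemma beyond bookkeeping, the argument as written does not yet prove it.

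\textbf{Your first remedy works.} Write $\tau=r\tau_{\max}$, where $\tau_{\max}$ is the right-hand side of \ref{asp:under relaxation param}, so $r\in(0,1)$ is a fixed number. Then $\frac1a\le\frac{r\delta}{2c\tau\beta}$ and $\frac{n}{\beta\gamma}\le\frac{r\delta}{2c\tau\beta}$. Hence $\frac{c}{2\eps_Y}+\frac{c}{2\eps_X}\le\frac{rc\delta}{2\theta\tau\beta}$, which is at most $\frac{c\delta}{4\tau\beta}$ for every $\theta\in[2r,2)$. The non-strict inequality suffices, because the margin $c\delta-(1-\delta)(2-\delta)>c\delta/2$ is strict. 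Taking $\theta=2r$ gives
\[
c_1=(1-r)a,\qquad c_2=(1-r)\frac{\beta\gamma}{n},\qquad c_3\ge\frac{c\delta/2-(1-\delta)(2-\delta)}{2\tau\beta}>0.
\]
The other fallbacks (keeping the discarded square, asymmetric Young weights) are unnecessary.

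\textbf{The paper's simpler choice.} The paper avoids this bookkeeping by letting $\eps$ scale with $\tau$ rather than with $a$ and $\gamma$: it sets $\eps_X=\eps_Y=4\tau\beta/\delta$. Then $\frac{c}{2\eps_Y}+\frac{c}{2\eps_X}=\frac{c\delta}{4\tau\beta}$ exactly. This makes $c_3=\frac{1}{\tau\beta}\left(\frac{c\delta}{4}-(1-\delta)(1-\frac{\delta}{2})\right)$, whose positivity is literally the hypothesis on $c$. Meanwhile $c_1=a-\frac{2c\tau\beta}{\delta}$ and $c_2=\beta\left(\frac{\gamma}{n}-\frac{2c\tau}{\delta}\right)$ are positive by exactly the two branches of the minimum in \ref{asp:under relaxation param}. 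The point is that the assumption is phrased as an upper bound on $\tau$. Choosing $\eps\propto\tau$ therefore makes each positivity condition coincide with one hypothesis, with no slack argument needed. The resulting constants are the same as yours with $\theta=2r$, up to which branch of the minimum is active.
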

\begin{proof}
Combining Lemma \ref{lemma:warm-up Lyapunov} with Lemma \ref{lemma:compensation inequality}, we obtain
\begin{align*}
\Psi_{k+1}-\Psi_k\leq&~-\lrbracket{\frac{1}{\eta}-\frac{\beta+L}{2}-\frac{c\cdot\eps_{Y}}{2}}\norm{Y^{(k+1)}-Y^{(k)}}_{\F}^{2}\\
&~-\lrbracket{\frac{\beta\gamma}{n}-\frac{c\cdot\eps_{X}}{2}}\norm{X^{(k+1)}(X^{(k+1)})^\top-X^{(k)}(X^{(k)})^\top}_{\F}^{2}\\
&~-\lrbracket{c\bigg(\frac{\delta}{2\tau\beta}-\frac{1}{2\eps_{Y}}-\frac{1}{2\eps_{X}}\bigg)-\frac{(1-\delta)(1-\frac{\delta}{2})}{\tau\beta}}\norm{\Lambda^{(k+1)}-\Lambda^{(k)}}_{\F}^{2}.
\end{align*}
Next, we choose $\eps_X=\eps_Y=4\tau\beta/\delta>0$. Substituting this choice into the above estimate yields equation \eqref{eqn:Lyapunov decrease inequality} with 
$$c_1:=\frac{1}{\eta}-\frac{\beta+L}{2}-\frac{2c\tau\beta}{\delta},\quad c_2:=\lrbracket{\frac{\gamma}{n}-\frac{2c\tau}{\delta}}\beta,\quad c_3:=\frac{1}{\tau\beta}\lrbracket{\frac{c\delta}{4}-(1-\delta)\bigg(1-\frac{\delta}{2}\bigg)}.$$
The positivity of these constants follows from the condition $c>2(1-\delta)(2-\delta)/\delta$, together with the bound on $\tau$ in Assumption \ref{assumption}~\ref{asp:under relaxation param}.
\end{proof}

The next step is to show that $\{\Psi_k\}$ is bounded from below.
\begin{lemma}\label{lemma:Lyapunov lower bound}
Suppose that {\rm Assumption \ref{assumption}} holds. Then the sequence $\{\Psi_k\}$ is bounded from below.
\end{lemma}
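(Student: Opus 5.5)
The plan is to show directly that $\Psi_k\geq 0$ for every $k\geq 1$. We write out
$$\Psi_k=f(Y^{(k)})+(1-\delta)\inner{\Lambda^{(k)},R^{(k)}}+\frac{\beta}{2}\snorm{R^{(k)}}_{\F}^2-\frac{\delta(1-\delta)}{2\tau\beta}\snorm{\Lambda^{(k)}}_{\F}^2+\frac{c(1-\delta)}{2\tau\beta}\snorm{\Lambda^{(k)}-\Lambda^{(k-1)}}_{\F}^2+C_0,$$
and bound the $f$-part and the dual part separately.

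\emph{The $f$-part.} By the $L$-smoothness in Assumption \ref{assumption}~\ref{asp:lipschitz smooth}, the descent lemma at $0$ gives
$$f(Y)\geq f(0)+\inner{\nabla f(0),Y}-\frac{L}{2}\snorm{Y}_{\F}^2.$$
Young's inequality gives $\inner{\nabla f(0),Y}\geq -\frac12\snorm{\nabla f(0)}_{\F}^2-\frac12\snorm{Y}_{\F}^2$. Hence
$$f(Y)\geq f(0)-\frac12\snorm{\nabla f(0)}_{\F}^2-\frac{L+1}{2}\snorm{Y}_{\F}^2.$$
Since $Y^{(k)}=R^{(k)}+X^{(k)}(X^{(k)})^\top$ and $\snorm{X^{(k)}(X^{(k)})^\top}_{\F}^2=p$, we have $\snorm{Y^{(k)}}_{\F}^2\leq 2\snorm{R^{(k)}}_{\F}^2+2p$. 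Therefore
$$f(Y^{(k)})+C_0\geq -(L+1)\snorm{R^{(k)}}_{\F}^2.$$
This explains the choice of $C_0$. By Assumption \ref{assumption}~\ref{asp:penalty param}, $\frac{\beta}{2}-(L+1)>0$, so the $\snorm{R^{(k)}}_{\F}^2$ contribution is nonnegative.

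\emph{The dual part.} We must handle the indefinite terms $(1-\delta)\inner{\Lambda^{(k)},R^{(k)}}-\frac{\delta(1-\delta)}{2\tau\beta}\snorm{\Lambda^{(k)}}_{\F}^2$. By \eqref{eqn:primal residual in dual} (valid for $k\geq1$), we can rewrite
$$\tau\beta R^{(k)}=\delta\Lambda^{(k)}+(1-\delta)\big(\Lambda^{(k)}-\Lambda^{(k-1)}\big).$$
Substituting, these two terms equal
$$\frac{1-\delta}{\tau\beta}\Big(\frac{\delta}{2}\snorm{\Lambda^{(k)}}_{\F}^2+(1-\delta)\inner{\Lambda^{(k)},\Lambda^{(k)}-\Lambda^{(k-1)}}\Big).$$
Young's inequality gives
$$(1-\delta)\inner{\Lambda^{(k)},\Lambda^{(k)}-\Lambda^{(k-1)}}\geq -\frac{\delta}{2}\snorm{\Lambda^{(k)}}_{\F}^2-\frac{(1-\delta)^2}{2\delta}\snorm{\Lambda^{(k)}-\Lambda^{(k-1)}}_{\F}^2.$$
So the dual part is at least $-\frac{(1-\delta)^3}{2\delta\tau\beta}\snorm{\Lambda^{(k)}-\Lambda^{(k-1)}}_{\F}^2$.

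\emph{Absorbing the remainder.} This negative term is absorbed by the positive term $\frac{c(1-\delta)}{2\tau\beta}\snorm{\Lambda^{(k)}-\Lambda^{(k-1)}}_{\F}^2$ built into $\Psi_k$, provided $c\geq(1-\delta)^2/\delta$. That condition holds because
$$c>\frac{2(1-\delta)(2-\delta)}{\delta}\geq\frac{(1-\delta)^2}{\delta}.$$
Collecting everything gives $\Psi_k\geq\big(\frac{\beta}{2}-L-1\big)\snorm{R^{(k)}}_{\F}^2\geq 0$ for all $k\geq1$. Since $\Psi_0$ is a single finite number, the whole sequence is bounded below.

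The main obstacle is the negative multiple of $\snorm{\Lambda^{(k)}}_{\F}^2$ in $T_k$, since no a priori bound on the dual iterates is available. The key step is re-expressing $R^{(k)}$ through the damped dual update so that the cross term $\inner{\Lambda^{(k)},R^{(k)}}$ produces a compensating positive multiple of $\snorm{\Lambda^{(k)}}_{\F}^2$, leaving only a $\snorm{\Lambda^{(k)}-\Lambda^{(k-1)}}_{\F}^2$ residue. That residue is exactly what the extra term in $\Psi_k$ controls.
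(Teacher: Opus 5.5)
Your proof is correct. It takes a genuinely different route from the paper in how the indefinite dual terms are controlled.

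\textbf{The paper's route.} The paper splits $\Psi_k$ so that the cross term $(1-\delta)\langle\Lambda^{(k)},R^{(k)}\rangle$, combined with $-\frac{\delta(1-\delta)}{\tau\beta}\snorm{\Lambda^{(k)}}_{\F}^2$, becomes the exact identity $\frac{(1-\delta)^2}{2\tau\beta}\big(\snorm{\Lambda^{(k)}}_{\F}^2-\snorm{\Lambda^{(k-1)}}_{\F}^2+\snorm{\Lambda^{(k)}-\Lambda^{(k-1)}}_{\F}^2\big)$. It drops the $c$-term as merely nonnegative and obtains only $\Psi_k\ge\frac{(1-\delta)^2}{2\tau\beta}\big(\snorm{\Lambda^{(k)}}_{\F}^2-\snorm{\Lambda^{(k-1)}}_{\F}^2\big)$. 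That bound can be negative, so the paper must telescope it and then invoke the monotonicity of $\{\Psi_k\}$ from Lemma~\ref{lemma:Lyapunov decrease inequality} to conclude $\Psi_k\ge 0$.

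\textbf{Your route.} You use Young's inequality and let the $c$-weighted term $\snorm{\Lambda^{(k)}-\Lambda^{(k-1)}}_{\F}^2$ absorb the residue. This gives the pointwise bound $\Psi_k\ge(\frac{\beta}{2}-L-1)\snorm{R^{(k)}}_{\F}^2\ge0$ directly. Your argument is therefore more self-contained. It never uses the descent inequality, so it needs neither the eigengap (A2), nor the stepsize (A4), nor the restriction on $\tau$ (A5). It relies only on (A1), (A3) and $c\ge(1-\delta)^2/\delta$.

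\textbf{What the paper's route buys.} Its inequality keeps $\frac{\delta(1-\delta)}{2\tau\beta}\snorm{\Lambda^{(k)}}_{\F}^2$ from Part~1. This is reused in Lemma~\ref{lemma:boundedness of iterates} to run a contraction recursion on $\snorm{\Lambda^{(k)}}_{\F}^2$.

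\textbf{A small rebalancing of your split.} Splitting Young's inequality instead as $\frac{\delta}{4}\snorm{\Lambda^{(k)}}_{\F}^2+\frac{(1-\delta)^2}{\delta}\snorm{\Lambda^{(k)}-\Lambda^{(k-1)}}_{\F}^2$ is still absorbable, because $c>2(1-\delta)(2-\delta)/\delta\ge 2(1-\delta)^2/\delta$. It yields $\Psi_k\ge(\frac{\beta}{2}-L-1)\snorm{R^{(k)}}_{\F}^2+\frac{\delta(1-\delta)}{4\tau\beta}\snorm{\Lambda^{(k)}}_{\F}^2$. Combined with $\Psi_k\le\Psi_1$, this gives boundedness of $\{\Lambda^{(k)}\}$ and $\{Y^{(k)}\}$ at once, without any recursion.
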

\begin{proof}
We decompose $\Psi_k$ into two parts:
\begin{align*}
\Psi_k=&\underbrace{f(Y^{(k)})+\frac{\beta}{2}\norm{R^{(k)}}_{\F}^{2}+\frac{\delta(1-\delta)}{2\tau\beta}\norm{\Lambda^{(k)}}_{\F}^{2}+c\cdot\frac{1-\delta}{2\tau\beta}\norm{\Lambda^{(k)}-\Lambda^{(k-1)}}_{\F}^{2}+C_0}_{\text{Part 1}}\\
&+\underbrace{\inner{(1-\delta) \Lambda^{(k)}, R^k}-\frac{\delta(1-\delta)}{\tau \beta}
\norm{\Lambda^{(k)}}_{\F}^2}_{\text{Part 2}}.
\end{align*}

For Part 1, we first estimate the objective term. By Assumption \ref{assumption} \ref{asp:lipschitz smooth},
\begin{equation*}
f(Y^{(k)})\geq f(0)+\inner{\nabla f(0),Y^{(k)}}-\frac{L}{2}\norm{Y^{(k)}}_{\F}^{2}\geq -\frac{L+1}{2}\norm{Y^{(k)}}_{\F}^{2}-\frac12\norm{\nabla f(0)}_{\F}^{2}+f(0).
\end{equation*}
Since $R^{(k)}=Y^{(k)}-X^{(k)}(X^{(k)})^\top$ and $\norm{X^{(k)}(X^{(k)})^\top}_{\F}^{2}=p$, we have $\norm{Y^{(k)}}_{\F}^{2}\leq 2(\norm{R^{(k)}}_{\F}^{2}+p)$. Therefore, by Assumption \ref{assumption} \ref{asp:penalty param},
\begin{equation*}
\text{Part 1}\geq f(Y^{(k)})+\frac{\beta}{2}\norm{R^{(k)}}_{\F}^{2}+C_0\geq \lrbracket{\frac{\beta}{2}-L-1}\norm{R^{(k)}}_{\F}^{2}\geq 0.
\end{equation*}

For Part 2, it follows from direct calculations that
\begin{align*}
\text{Part 2}& = \inner{(1-\delta)\Lambda^{(k)},\frac{\Lambda^{(k)}-(1-\delta)\Lambda^{(k-1)}}{\tau\beta}-\frac{\delta}{\tau\beta}\Lambda^{(k)}}\\
& = \frac{(1-\delta)^2}{2\tau\beta}\lrbracket{\norm{\Lambda^{(k)}}_{\F}^{2}-\norm{\Lambda^{(k-1)}}_{\F}^{2}+\norm{\Lambda^{(k)}-\Lambda^{(k-1)}}_{\F}^{2}}\\
& \geq  \frac{(1-\delta)^2}{2\tau\beta}\lrbracket{\norm{\Lambda^{(k)}}_{\F}^{2}-\norm{\Lambda^{(k-1)}}_{\F}^{2}}.
\end{align*}

Combining the estimates of Part 1 and Part 2, we obtain
\begin{equation*}
\Psi_k\geq \frac{(1-\delta)^2}{2\tau\beta}\lrbracket{\norm{\Lambda^{(k)}}_{\F}^{2}-\norm{\Lambda^{(k-1)}}_{\F}^{2}}.
\end{equation*}
For any $K\geq 1$, summing the above inequality from $k=1$ to $K$ yields
\begin{equation*}
\sum_{k=1}^K\Psi_k\geq \frac{(1-\delta)^2}{2\tau\beta}\lrbracket{\norm{\Lambda^{(K)}}_{\F}^{2}-\norm{\Lambda^{(0)}}_{\F}^{2}}\geq -\frac{(1-\delta)^2}{2\tau\beta}\norm{\Lambda^{(0)}}_{\F}^{2}.
\end{equation*}
Hence, $\{\sum_{k=1}^K\Psi_k\}$ is bounded below.
Together with the fact that $\{\Psi_k\}$ is nonincreasing from Lemma \ref{lemma:Lyapunov decrease inequality}, we conclude that $\Psi_k\geq 0$ for all $k\geq 1$.
\end{proof}

An immediate consequence of Lemmas \ref{lemma:Lyapunov decrease inequality} and \ref{lemma:Lyapunov lower bound} is as follows.
\begin{corollary}\label{corollary:summability}
Suppose that {\rm Assumption \ref{assumption}} holds. Then
\begin{equation*}
\sum_{k=1}^{\infty}\norm{Y^{(k+1)}-Y^{(k)}}_{\F}^{2},~~\sum_{k=1}^{\infty}\norm{X^{(k+1)}(X^{(k+1)})^\top-X^{(k)}(X^{(k)})^\top}_{\F}^{2},~~\sum_{k=1}^{\infty}\norm{\Lambda^{(k+1)}-\Lambda^{(k)}}_{\F}^{2}
\end{equation*}
are all finite. In particular,  $Y^{(k+1)}-Y^{(k)}\to 0,~X^{(k+1)}(X^{(k+1)})^\top-X^{(k)}(X^{(k)})^\top\to 0,~\Lambda^{(k+1)}-\Lambda^{(k)}\to 0$. Moreover, we have $G^{(k)}\to 0$.
\end{corollary}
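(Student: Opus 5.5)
We argue by telescoping. Fix any $K\geq 1$ and sum the descent inequality of Lemma \ref{lemma:Lyapunov decrease inequality} over $k=1,\ldots,K$. This gives
\begin{equation*}
\sum_{k=1}^{K}\lrbracket{c_1\norm{Y^{(k+1)}-Y^{(k)}}_{\F}^{2}+c_2\norm{X^{(k+1)}(X^{(k+1)})^\top-X^{(k)}(X^{(k)})^\top}_{\F}^{2}+c_3\norm{\Lambda^{(k+1)}-\Lambda^{(k)}}_{\F}^{2}}\leq \Psi_1-\Psi_{K+1}.
\end{equation*}
By Lemma \ref{lemma:Lyapunov lower bound}, $\Psi_{K+1}\geq 0$, so the right-hand side is bounded by $\Psi_1$. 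Here $\Psi_1$ is a finite number, determined by $X^{(1)},Y^{(1)},\Lambda^{(1)},\Lambda^{(0)}$. Since $c_1,c_2,c_3>0$, each of the three partial sums is bounded uniformly in $K$. Nonnegative series with bounded partial sums converge, so all three series are finite. Their terms then tend to zero, which gives $Y^{(k+1)}-Y^{(k)}\to 0$, $X^{(k+1)}(X^{(k+1)})^\top-X^{(k)}(X^{(k)})^\top\to 0$ and $\Lambda^{(k+1)}-\Lambda^{(k)}\to 0$.

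For the claim $G^{(k)}\to 0$, we use the $Y$-update with constant stepsize (Assumption \ref{assumption}~\ref{asp:step size}). With $G^{(k)}$ as in \eqref{eqn:projected gradient}, the update reads $Y^{(k+1)}-Y^{(k)}=-\eta G^{(k)}$. Hence
\begin{equation*}
\norm{G^{(k)}}_{\F}=\frac{1}{\eta}\norm{Y^{(k+1)}-Y^{(k)}}_{\F}\to 0,
\end{equation*}
because $\eta>0$ is fixed. Moreover $\sum_k\norm{G^{(k)}}_{\F}^{2}<\infty$.

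No step here is a real obstacle; the substantive work lies in Lemmas \ref{lemma:Lyapunov decrease inequality} and \ref{lemma:Lyapunov lower bound}. One point needs care: those lemmas must be applied for all $k\geq 1$. This is exactly the index range for which they hold, and it matches the lower summation index $k=1$ in the statement. The missing $k=0$ term only affects finitely many summands, so it does not change finiteness.
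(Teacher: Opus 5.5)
Your proposal is correct and matches the paper's intended argument. The paper presents the corollary as an immediate consequence of telescoping the descent inequality of Lemma~\ref{lemma:Lyapunov decrease inequality} over $k\geq 1$ and bounding $\Psi_{K+1}$ below using Lemma~\ref{lemma:Lyapunov lower bound}, whose proof gives $\Psi_k\geq 0$. The claim $G^{(k)}\to 0$ then follows from $Y^{(k+1)}-Y^{(k)}=-\eta G^{(k)}$ with the constant stepsize, exactly as you wrote.
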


The remaining issue is to establish the boundedness of iterates.
\begin{lemma}\label{lemma:boundedness of iterates}
Under {\rm Assumption \ref{assumption}}, the sequences $\{Y^{(k)}\}$ and $\{\Lambda^{(k)}\}$ generated by Algorithm \ref{algn:DP-SSA} are bounded. 
\end{lemma}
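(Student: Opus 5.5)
Our plan is to exploit the contraction built into the damped dual update \eqref{eqn:damped dual update} together with the monotonicity and lower boundedness of $\{\Psi_k\}$ from Lemmas \ref{lemma:Lyapunov decrease inequality} and \ref{lemma:Lyapunov lower bound}. We first bound $\{\Lambda^{(k)}\}$ and then deduce the bound on $\{Y^{(k)}\}$. By Lemma \ref{lemma:Lyapunov decrease inequality}, $\Psi_k\le\Psi_1$ for all $k\ge1$. Revisiting the decomposition in the proof of Lemma \ref{lemma:Lyapunov lower bound}, Part 1 is bounded below by $(\beta/2-L-1)\norm{R^{(k)}}_\F^2+\frac{\delta(1-\delta)}{2\tau\beta}\norm{\Lambda^{(k)}}_\F^2\ge \frac{\delta(1-\delta)}{2\tau\beta}\norm{\Lambda^{(k)}}_\F^2$. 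This already gives us a coercive term in $\Lambda^{(k)}$, because we keep the term that was discarded there.

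Next we handle Part 2. It equals $\frac{(1-\delta)^2}{2\tau\beta}\big(\norm{\Lambda^{(k)}}_\F^2-\norm{\Lambda^{(k-1)}}_\F^2+\norm{\Lambda^{(k)}-\Lambda^{(k-1)}}_\F^2\big)\ge \frac{(1-\delta)^2}{2\tau\beta}\big(\norm{\Lambda^{(k)}}_\F^2-\norm{\Lambda^{(k-1)}}_\F^2\big)$. Combining the two parts and setting $a_k:=\norm{\Lambda^{(k)}}_\F^2$, we obtain
\[
\Psi_1\ge\Psi_k\ge \frac{(1-\delta)}{2\tau\beta}\big((\delta+1-\delta)a_k-(1-\delta)a_{k-1}\big)=\frac{1-\delta}{2\tau\beta}\big(a_k-(1-\delta)a_{k-1}\big).
\]
This is a linear recursion $a_k\le (1-\delta)a_{k-1}+C$ with $C:=2\tau\beta\Psi_1/(1-\delta)$ and contraction factor $1-\delta<1$ (recall $\delta\in(0,1)$ in Assumption \ref{assumption}). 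Iterating it gives $a_k\le(1-\delta)^{k-1}a_1+C/\delta$, so $\{\Lambda^{(k)}\}$ is bounded. The first term $a_0$ (or $a_1$) is handled directly, since $\Lambda^{(1)}$ is determined by finitely many finite quantities.

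For $\{Y^{(k)}\}$, we use \eqref{eqn:primal residual in dual}: $R^{(k)}=\frac{1}{\tau\beta}\big(\Lambda^{(k)}-(1-\delta)\Lambda^{(k-1)}\big)$, which is bounded once $\Lambda$ is. Then $\norm{Y^{(k)}}_\F\le\norm{R^{(k)}}_\F+\norm{X^{(k)}(X^{(k)})^\top}_\F=\norm{R^{(k)}}_\F+\sqrt p$ gives boundedness. Alternatively, Part 1 controls $(\beta/2-L-1)\norm{R^{(k)}}_\F^2$ by Assumption \ref{assumption}~\ref{asp:penalty param}, provided Part 2 is bounded below using the bounded $\Lambda$.

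The main obstacle is the first step: correctly accounting for the two $\norm{\Lambda^{(k)}}^2$ coefficients, $\delta(1-\delta)/(2\tau\beta)$ from Part 1 and $(1-\delta)^2/(2\tau\beta)$ from Part 2, so that they sum to $(1-\delta)/(2\tau\beta)$ against the $(1-\delta)^2/(2\tau\beta)$ coefficient on $a_{k-1}$. This strict gap of size $\delta$ is exactly what the damping provides. We must also make sure that the uniform upper bound $\Psi_k\le\Psi_1$ holds from $k=1$ on, which Lemma \ref{lemma:Lyapunov decrease inequality} guarantees.
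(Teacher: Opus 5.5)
Your proposal is correct and follows the paper's proof essentially step for step. Like the paper, you keep the $\frac{\delta(1-\delta)}{2\tau\beta}\snorm{\Lambda^{(k)}}_{\F}^{2}$ term from Part 1 and combine it with the Part 2 bound and $\Psi_k\le\Psi_1$ to get the contraction $\snorm{\Lambda^{(k)}}_{\F}^{2}\le(1-\delta)\snorm{\Lambda^{(k-1)}}_{\F}^{2}+\frac{2\tau\beta}{1-\delta}\Psi_1$, iterate it, and then bound $Y^{(k)}$ through $R^{(k)}=\frac{1}{\tau\beta}\bigl(\Lambda^{(k)}-(1-\delta)\Lambda^{(k-1)}\bigr)$ and $\snorm{X^{(k)}(X^{(k)})^\top}_{\F}=\sqrt{p}$.
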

\begin{proof}
In the proof of Lemma \ref{lemma:Lyapunov lower bound}, we have established that
\begin{equation*}
\Psi_k\geq  \frac{\delta(1-\delta)}{2\tau\beta}\norm{\Lambda^{(k)}}_{\F}^{2}+ \frac{(1-\delta)^2}{2\tau\beta}\lrbracket{\norm{\Lambda^{(k)}}_{\F}^{2}-\norm{\Lambda^{(k-1)}}_{\F}^{2}}. 
\end{equation*}
Since $\{\Psi_k\}$ is nonincreasing, we have $\Psi_k\leq \Psi_1$, which further implies
\begin{equation*}
\norm{\Lambda^{(k)}}_{\F}^{2}\leq (1-\delta)\norm{\Lambda^{(k-1)}}_{\F}^{2}+\frac{2\tau\beta}{1-\delta}\Psi_1.
\end{equation*}
Applying this inequality recursively gives
\begin{equation*}
\norm{\Lambda^{(k)}}_{\F}^{2}\leq (1-\delta)^k\norm{\Lambda^{(0)}}_{\F}^{2}+\frac{2\tau\beta}{1-\delta}\Psi_1 \sum_{\ell=0}^{k-1}(1-\delta)^{\ell}\leq \norm{\Lambda^{(0)}}_{\F}^{2}+\frac{2\tau\beta}{\delta(1-\delta)}\Psi_1.
\end{equation*}
Thus, $\{\Lambda^{(k)}\}$ is bounded.
Finally, we rewrite the dual update \eqref{eqn:damped dual update} as
\begin{equation*}
Y^{(k)}=X^{(k)}(X^{(k)})^\top+\frac{1}{\tau\beta}\lrbracket{\Lambda^{(k)}-(1-\delta)\Lambda^{(k-1)}}.
\end{equation*}
Since $\norm{X^{(k)}(X^{(k)})^\top}_{\F}=\sqrt{p}$ and $\{\Lambda^{(k)}\}$ is bounded, the sequence $\{Y^{(k)}\}$ is bounded  as well.
\end{proof}

We now state the main convergence result for Algorithm \ref{algn:DP-SSA}.
\begin{theorem}\label{thm:main theorem}
Suppose that {\rm Assumption \ref{assumption}} holds. Let $\{(X^{(k)},Y^{(k)},\Lambda^{(k)})\}$ be the sequence generated by Algorithm \ref{algn:DP-SSA}. Then $\{(X^{(k)},Y^{(k)},\Lambda^{(k)})\}$ has at least one accumulation point, and for each accumulation point $(X^\star,Y^\star,\Lambda^\star)$, there exist multipliers $\Sigma^\star\in\S^p$ and $\bmu^\star=(\mu_1^\star,\ldots,\mu_m^\star)^\top\in\R^m$ such that 
\begin{equation*}
\norm{F(X^\star,\bmu^\star)X^{\star}-X^{\star}\Sigma^{\star}}_{\F}\leq\hat C_1\delta,\qquad \max_{1\leq j\leq m}\abs{\trace{(X^{\star\top}W_jX^{\star})}-b_j}\leq \hat C_2\delta,
\end{equation*}
where 
$$\hat C_1:=\frac{L\sqrt{p(n-p)}}{\tau\beta}\norm{\Lambda^{\star}}_{\F},\quad \hat C_2:=\frac{1}{\tau\beta}\norm{\Lambda^{\star}}_{\F}\cdot\max_{1\leq j\leq m}\norm{W_j}_{\F}.$$

\end{theorem}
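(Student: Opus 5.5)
Existence of an accumulation point follows from Lemma \ref{lemma:boundedness of iterates}: $\{Y^{(k)}\}$ and $\{\Lambda^{(k)}\}$ are bounded and $X^{(k)}\in\St_p(\R^n)$ lies in a compact set, so some subsequence $\{(X^{(k)},Y^{(k)},\Lambda^{(k)})\}_{k\in\mathcal K}$ converges to $(X^\star,Y^\star,\Lambda^\star)$. Along this subsequence I will pass to the limit in three relations: the dual update, the $Y$-optimality relation $G^{(k)}\to0$, and the $X$-subproblem optimality.

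\emph{Feasibility.} From \eqref{eqn:primal residual in dual}, $R^{(k)}=\frac{1}{\tau\beta}(\Lambda^{(k)}-(1-\delta)\Lambda^{(k-1)})$. Corollary \ref{corollary:summability} gives $\Lambda^{(k)}-\Lambda^{(k-1)}\to0$, so $\Lambda^{(k-1)}\to\Lambda^\star$ along $\mathcal K$ as well. Hence
\[
R^\star:=Y^\star-X^\star X^{\star\top}=\frac{\delta}{\tau\beta}\Lambda^\star .
\]
Since $Y^{(k)}\in\calY$ and $\calY$ is closed, $\trace(W_jY^\star)=b_j$. Therefore $|\trace(X^{\star\top}W_jX^\star)-b_j|=|\trace(W_jR^\star)|\le\norm{W_j}_\F\norm{R^\star}_\F$, which is exactly the bound with $\hat C_2\delta$.

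\emph{Stationarity.} Along $\mathcal K$ we also have $X^{(k+1)}X^{(k+1)\top}\to X^\star X^{\star\top}$ by Corollary \ref{corollary:summability}. I will replace $X^{(k+1)}$ by $X^{(k)}Q_k$ for suitable orthogonal $Q_k$, or work with projectors throughout. Passing to the limit in $G^{(k)}\to0$ and using the projection formula (with bounded multipliers $\hat{\bmu}^{(k)}$ from the fixed invertible $M$), we obtain $\bmu^\star$ with
\[
\nabla f(Y^\star)+(1-\delta)\Lambda^\star+\beta R^\star+\sum_j\mu_j^\star W_j=0 .
\]
The $X$-subproblem says that $X^{(k+1)}$ spans an invariant subspace of $A^{(k)}$, i.e.\ $\mathrm{P}^\perp_{X^{(k+1)}}A^{(k)}X^{(k+1)}=0$. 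Continuity of the limit gives $\mathrm{P}^\perp_{X^\star}(Y^\star+\frac{1-\delta}{\beta}\Lambda^\star)X^\star=0$; the eigengap in Assumption \ref{assumption}~\ref{asp:eigengap} keeps the dominant subspace well defined in the limit, though the invariance relation alone suffices. Multiplying by $\beta$ and noting $\mathrm{P}^\perp_{X^\star}X^\star X^{\star\top}X^\star=0$, this reads $\mathrm{P}^\perp_{X^\star}\big((1-\delta)\Lambda^\star+\beta R^\star\big)X^\star=0$. Substituting into the $Y$-relation gives
\[
\mathrm{P}^\perp_{X^\star}\Big(\nabla f(Y^\star)+\sum_j\mu_j^\star W_j\Big)X^\star=0 .
\]

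\emph{Closing the gap.} With $\Sigma^\star:=X^{\star\top}F(X^\star,\bmu^\star)X^\star$, we have $F(X^\star,\bmu^\star)X^\star-X^\star\Sigma^\star=\mathrm{P}^\perp_{X^\star}F(X^\star,\bmu^\star)X^\star=\mathrm{P}^\perp_{X^\star}\big(\nabla f(X^\star X^{\star\top})-\nabla f(Y^\star)\big)X^\star$. By $L$-smoothness its Frobenius norm is at most $L\norm{R^\star}_\F$ times a dimensional factor. To obtain the stated constant $\sqrt{p(n-p)}$ I would bound via $\norm{\mathrm{P}^\perp_{X^\star}}_2=1$ and $\norm{X^\star}_2=1$, then use norm equivalence on the $(n-p)\times p$ block, giving $\le L\sqrt{p(n-p)}\norm{R^\star}_\F=\hat C_1\delta$ after inserting $R^\star=\frac{\delta}{\tau\beta}\Lambda^\star$. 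Any cruder but valid constant would also do.

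The main obstacle is the careful limit passage in the $X$-step, because $X^{(k+1)}$ and $X^{(k)}$ need not converge to the same matrix, only to the same projector. I will formulate every relation in terms of the projectors $X X^\top$, which converge by Corollary \ref{corollary:summability}, to avoid this issue.
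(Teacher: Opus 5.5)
Your proof is correct and follows the paper's argument in substance. It uses the same ingredients: the dual identity giving $R^\star=\frac{\delta}{\tau\beta}\Lambda^\star$, the limit $G^{(k)}\to 0$ with $N^{(k)}\in\Span\{W_1,\ldots,W_m\}$, the first-order condition of the $X$-subproblem, and $L$-smoothness. The only difference is that you pass to the limit in each relation first, phrasing the $X$-step through the converging projectors $X^{(k+1)}(X^{(k+1)})^\top$, which cleanly handles the $X^{(k)}$ versus $X^{(k+1)}$ mismatch; the paper instead combines the relations at finite $k$ via $\Sigma^{(k)}$ and then lets $k\to\infty$.

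Your bound $\norm{{\rm P}_{X^\star}^\perp(\nabla f(X^\star X^{\star\top})-\nabla f(Y^\star))X^\star}_{\F}\le L\norm{R^\star}_{\F}$ already gives the stated $\hat C_1\delta$, because $\sqrt{p(n-p)}\ge 1$ for $p<n$ and the residual is zero for $p=n$. The norm-equivalence step is therefore superfluous. Contrary to your closing remark, a cruder constant would not do, since the theorem fixes $\hat C_1$.
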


\begin{proof}
The existence of an accumulation point follows directly from Lemma \ref{lemma:boundedness of iterates} and the compactness of the Stiefel manifold. By equation \eqref{eqn:projected gradient} and the characterization of the orthogonal projection onto $\calT_{\calY}$, there exists a unique matrix $N^{(k)}\in \Span\{W_1,\ldots,W_m\}$ such that
\begin{equation}\label{eqn:projected gradient in main theorem}
G^{(k)}=\nabla f(Y^{(k)})+(1-\delta)\Lambda^{(k)}+\beta(Y^{(k)}-X^{(k+1)}(X^{(k+1)})^\top)+N^{(k)}.
\end{equation}
Since $\norm{G^{(k)}}_{\F}\to 0$ , $\{X^{(k)}\}, \{Y^{(k)}\}$ and $\{\Lambda^{(k)}\}$ are bounded, we have that $\{N^{(k)}\}$ is bounded as well. We further define 
\begin{equation}\label{eqn:Lagrange multiplier}
\Sigma^{(k)}:=(X^{(k)})^\top\Big(\nabla f(X^{(k)}(X^{(k)})^\top)+N^{(k)}\Big)X^{(k)}\in \S^p.    
\end{equation}
Passing to a convergent subsequence without relabeling, we may assume $X^{(k)}\to X^\star\in\St_p(\R^n)$, $Y^{(k)}\to Y^\star\in\calY$, $\Lambda^{(k)}\to\Lambda^\star$, $\Sigma^{(k)}\to\Sigma^\star$, and $N^{(k)}\to N^{\star}=\sum_{j=1}^{m}\mu_{j}^{\star}W_j\in \S^n$ for some $\bm{\mu}^{\star}\in \R^{m}$. 

\par It follows from equations \eqref{eqn:projected gradient in main theorem} and \eqref{eqn:Lagrange multiplier} that
\begin{align}
&~\Big(\nabla f(X^{(k)}(X^{(k)})^\top)+N^{(k)}\Big)X^{(k)}-X^{(k)}\Sigma^{(k)}\nonumber\\
=&~ {\rm P}_{X^{(k)}}^\perp \Big(\nabla f(X^{(k)}(X^{(k)})^{\top})+N^{(k)}\Big)X^{(k)}\nonumber\\
= &~{\rm P}_{X^{(k)}}^\perp\Big(-\beta(Y^{(k)}-X^{(k+1)}(X^{(k+1)})^\top)-(1-\delta)\Lambda^{(k)}\Big)X^{(k)}\nonumber\\
&~+{\rm P}_{X^{(k)}}^\perp\Big(G^{(k)}+\nabla f(X^{(k)}(X^{(k)})^\top)-\nabla f(Y^{(k)})\Big)X^{(k)}.\label{eqn:substationarity}
\end{align}
Note that the first term on the right-hand side vanishes as $k\to\infty$ by Corollary \ref{corollary:summability} and 
\begin{equation*}
\projj\Big(\beta(Y^{(k)}-X^{(k+1)}(X^{(k+1)})^\top)+(1-\delta)\Lambda^{(k)}\Big)X^{(k+1)}=0,
\end{equation*}
which follows from the first-order optimality condition of the $X$-subproblem \eqref{eqn:X subproblem}. For the second term, by Assumption \ref{assumption}~\ref{asp:lipschitz smooth}, we have
\begin{align*}
\norm{{\rm P}_{X^{(k)}}^\perp\Big(G^{(k)}+\nabla f(X^{(k)}(X^{(k)})^\top)-\nabla f(Y^{(k)})\Big)X^{(k)}}_{\F}\leq \sqrt{p(n-p)}\Big(\norm{G^{(k)}}_{\F}+L\norm{R^{(k)}}_{\F}\Big).
\end{align*}
Taking the limit $k\to\infty$ in equation  \eqref{eqn:substationarity} and using both Corollary \ref{corollary:summability} and equation \eqref{eqn:primal residual in dual}, we arrive at
\begin{equation*}
\norm{F(X^\star,\bmu^\star)X^\star-X^\star\Sigma^\star}=\norm{\Big(\nabla f(X^{\star}X^{\star^\top})+N^{\star}\Big)X^{\star}-X^{\star}\Sigma^{\star}}_{\F}\leq \frac{L\sqrt{p(n-p)}}{\tau\beta}\norm{\Lambda^{\star}}_{\F}\delta.
\end{equation*}
For feasibility, since $Y^{\star}\in \calY$, we have $\trace{(W_jY^{\star})}=b_j$ for $j=1,\ldots,m$. Hence
\begin{equation*}
\abs{\trace{(X^{\star \top}W_jX^{\star})}-b_j}=\abs{\trace{(W_j(X^{\star}X^{\star^{\top}}-Y^{\star}))}}\leq \norm{W_j}_{\F}\norm{R^{\star}}_{\F}=\frac{\delta}{\tau\beta}\norm{\Lambda^{\star}}_{\F}\norm{W_j}_{\F}.
\end{equation*}
Taking the maximum over $j$ gives the feasibility bound and completes the proof.
\end{proof}

\section{Numerical experiments}\label{sec:numerical experiments}

\par In this section, we test the proposed DASSP on synthetic and realistic CDFT problems. In Section \ref{subsec:default settings}, we determine a default setting of parameters in DASSP by experiments on synthetic test instances. In Sections \ref{subsec:inexactness of quadratic penalty}-\ref{subsec:charge-constrained DFT}, we demonstrate the effectiveness, efficiency, and robustness of DASSP on charge-transfer and charge-localization problems, through numerical comparison with two classes of algorithms mentioned in Section \ref{sec:introduction}. 


\medskip

\par\noindent\textbf{Algorithms in comparison.} 
The first class is the quadratic penalty method \cite{kresse1996efficiency,kresse1996efficient,ma2015constrained}. 
As discussed in Section \ref{sec:introduction}, the penalty problem \eqref{eqn:CDFT quadratic penalty} can be solved by either the SCF iteration or optimization algorithms on the Stiefel manifold. The former option is abbreviated as QP-SCF and is implemented using the PySCF package \cite{sun2020recent}. For the latter, we invoke the PCAL algorithm on the Stiefel manifold with its default setting \cite{gao2022ofdf,gao2019plam} and call the resulting method QP-PCAL. The second class is the double-loop method proposed by Wu and Van Voorhis \cite{wu2005direct}, referred to as the WV method. Its implementation follows from the CDFT example in the PySCF package\footnote{Available at \href{https://github.com/pyscf/pyscf/blob/master/examples/1-advanced/033-constrained_dft.py}{https://github.com/pyscf/pyscf/blob/master/examples/1-advanced/033-constrained\_dft.py}.}. We further employ the post-DIIS method \cite{pulay1980convergence,pulay1982diis} with a subspace dimension of eight in both the QP-SCF and WV methods for acceleration. 

\medskip

\par\noindent\textbf{Stopping criteria.} We define the following two residuals: for any $X\in\R^{n\times p}$ and $\bmu\in\R^m$,
$$r_1(X,\bmu):=\norm{(I-XX^\top)F(X,\bmu)X}_{\rm F},\quad r_2(X):=\max_{1\le j\le m}\abs{\trace\big(X^\top W_jX\big)-b_j}.$$
The former denotes the stationarity (resp., substationarity) violation for the QP-SCF and QP-PCAL (resp., WV and DASSP) methods (cf. equation \eqref{eqn:CDFT stationarity conditions nepv} with $\Sigma:=X^\top F(X,\bmu)X$), while the latter  represents the feasibility violation for the WV and DASSP methods. Unless otherwise specified, 
the QP-SCF and QP-PCAL are declared converged when $r_1\big(X^{(k)},\tilde{\bmu}^{(k)}\big)$ falls below $10^{-8}$, where $\tilde{\bmu}^{(k)}:=(\tilde\mu_1^{(k)},\ldots,\tilde\mu_m^{(k)})^\top\in\R^m$ and $\tilde\mu_j^{(k)}:=2\sigma(\trace((X^{(k)})^\top W_jX^{(k)})-b_j)$ ($j=1,\ldots,m$). 
Moreover, an orthogonalization post-processing step is enabled for QP-PCAL when the final manifold constraint violation exceeds $10^{-13}$ \cite{gao2022ofdf}. In the WV method, the inner Newton step is terminated when $r_2(X^{(k)})$ falls below $10^{-8}$ or the inner iteration number exceeds a prespecified value; at most $20$ inner Newton steps are allowed during the first ten outer iterations, and at most $50$ inner steps are allowed thereafter. The outer iteration is terminated when $r_1\big(X^{(k)},\bmu^{(k)}\big)$ is smaller than $10^{-8}$; see Section \ref{sec:introduction} for the definition of $\bmu^{(k)}$. DASSP is terminated when $r_1\big(X^{(k)},\hat{\bmu}^{(k)}\big)$ and $r_2(X^{(k)})$ are both below $10^{-8}$, where $\hat{\bmu}^{(k)}\in\R^m$ is determined through equation \eqref{eqn:projection linear system}. We also set maximum numbers of iterations for the algorithms, which will be specified later. 

\medskip

\par\noindent\textbf{Testing molecules and environment.} All the test molecules in this section are described by restricted or unrestricted Kohn-Sham DFT (see Appendix \ref{appsec:UKS functional}); basis sets and exchange-correlation functionals in use will be detailed later. The molecular geometries used in Sections \ref{subsec:default settings}-\ref{subsec:efficiency test} can be found in the Computational Chemistry Comparison and Benchmark DataBase (CCCBDB) \cite{johnson2022nist}, whereas the geometry used in Section \ref{subsec:charge-constrained DFT} is from \cite{ahart2022cp2k}. All the experiments are performed on a workstation with two Intel(R) Xeon(R) Processors Gold 5317 (at 3.00GHz$\times$12, 18M Cache) and 512GB of RAM under Ubuntu 20.04.6. The codes are implemented in Python 3.10.19 with PySCF 2.11.0 \cite{sun2020recent}. 

\subsection{Parameter settings}\label{subsec:default settings}
\par In this subsection, we determine the default parameters for the proposed DASSP. We test DASSP on three molecules: \ce{CO2}, \ce{C2H6}, and \ce{C6H5OCH3}, treated within restricted Kohn-Sham DFT in PySCF. All calculations employ the {\tt sto-3g} atomic-orbital basis set \cite{hehre1969self} and the PBE exchange-correlation functional \cite{perdew1996pbe}; as such, we have $(n,p)=(15,11)$ for \ce{CO2}, $(n,p)=(16,9)$ for \ce{C2H6}, and $(n,p)=(48,29)$ for \ce{C6H5OCH3}. We impose $m=3, 6, 9$ quadratic constraints on \ce{CO2}, \ce{C2H6}, and \ce{C6H5OCH3}, respectively. For each molecule, the symmetric constraint matrices $\{W_j\}_{j=1}^{m}$ are generated independently from the Gaussian distribution, and the corresponding $\{b_j\}_{j=1}^{m}$ are defined using a randomly generated reference point to ensure feasibility. 

\par The algorithm is initialized randomly in this subsection. Specifically, $X^{(0)}\in\St_p(\R^n)$ is generated as a random matrix from the Gaussian distribution followed by a QR decomposition, $Y^{(0)}$ is initialized as the projection of $X^{(0)}(X^{(0)})^{\top}$ onto the affine set $\calY$, and the initial multiplier is $\Lambda^{(0)}=0$. For a fair comparison, the same initialization is used within each parameter sweep. 

\par We tune four components of DASSP in the following order: the stepsize strategy in the $Y$-update \eqref{eqn:Y update}, the damping parameter $\delta$, the penalty parameter $\beta$,  and the under-relaxation parameter $\tau$. In each experiment, only one parameter is varied while keeping the others fixed.

    
    
    
    

\begin{figure}[!t]
    \centering
    \includegraphics[width=0.95\linewidth]{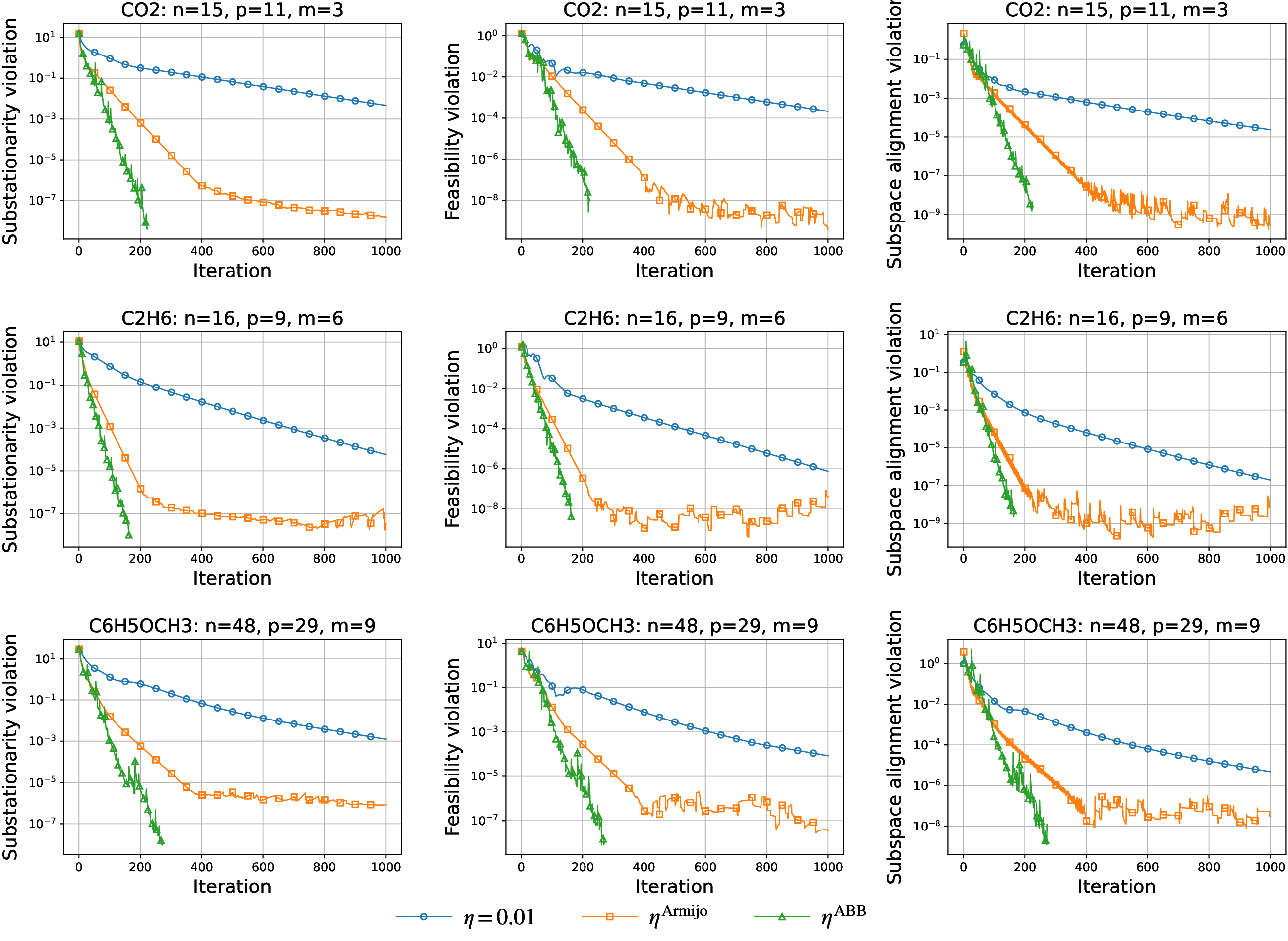}
    \caption{Comparison between different stepsize strategies. The blue curves with circle markers, orange curves with square markers, and green curves with triangle markers represent the results using the constant, Armijo backtracking, and clipped ABB strategies, respectively. From top to bottom: \ce{CO2}, \ce{C2H6}, and \ce{C6H5OCH3}. From left to right: substationarity violation, feasibility violation, and subspace alignment violation.}
    \label{fig:stepsize comparison}
\end{figure}

\medskip

\par\noindent\textbf{Stepsize strategy.} We fix $\beta=20,\tau=0.2$, and $\delta=10^{-10}$ and compare three strategies for the projected gradient stepsize $\eta_k$ in the $Y$-update \eqref{eqn:Y update}: (1) constant stepsize\footnote{We note that this stepsize has been slightly tuned to improve the efficiency of the constant-stepsize strategy}.: $\eta_k\equiv10^{-2}$; (2) Armijo backtracking line search: starting from $\eta_{k}=10^{-2}$, the trial stepsize is halved until
\begin{equation*}
\calL_{\beta,\delta}(X^{(k+1)},Y^{(k)}-\eta_k G^{(k)},\Lambda^{(k)})\leq \calL_{\beta,\delta}(X^{(k+1)},Y^{(k)},\Lambda^{(k)})-10^{-3}\cdot\eta_k\norm{G^{(k)}}_{\F}^{2}
\end{equation*}
is satisfied;
(3) clipped alternating Barzilai-Borwein (ABB) stepsize \cite{dai2005projected}: 
\begin{equation*}
\tilde\eta_{k}^{\text{ABB}}:=
\begin{cases}
10^{-2}, & \text{if}~k=0,\\
\frac{\inner{S^{(k-1)},S^{(k-1)}}}{|\inner{S^{(k-1)},Z^{(k-1)}}|}, & \text{if}~k\ne0~\text{is~odd},\\
\frac{|\inner{S^{(k-1)},Z^{(k-1)}}|}{\inner{Z^{(k-1)},Z^{(k-1)}}}, & \text{if}~k\ne0~\text{is~even},
\end{cases}\qquad 
\eta_k^{\text{ABB}}:=\min\{10,\max\{10^{-6},\tilde\eta_k^{\text{ABB}}\}\},
\end{equation*}
where $S^{(k-1)}:=Y^{(k)}-Y^{(k-1)}\in \S^n$ and $Z^{(k-1)}:=G^{(k)}-G^{(k-1)}\in \S^n$. 

\par Figure \ref{fig:stepsize comparison} shows the results of DASSP with different strategies for $\eta_k$, where we also include the evolution of the subspace alignment violation, defined as $\snorm{R^{(k)}}_{\rm F}$ (see equation \eqref{eqn:subspace alignment violation} and same below). Among the three strategies, the clipped ABB gives the most favorable overall performance. We therefore adopt $\eta_k=\eta_k^{\text{ABB}}$ in all subsequent experiments.

\medskip

\par \noindent \textbf{Damping parameter $\delta$.} We fix $\eta_k=\eta_k^{\text{ABB}}, \beta=20$, and $\tau=0.2$, and vary the damping parameter over $\{10^{-2},10^{-4},10^{-6},10^{-8},10^{-10},0\}$. The numerical results are presented in Figure \ref{fig:delta comparison}. 
For $\delta>0$, we observe that DASSP produces an approximate KKT point and the accuracy is improved as $\delta$ decreases. This is consistent with Theorem \ref{thm:main theorem}. In particular, $\delta=10^{-10}$ enables the algorithm to satisfy the stopping criterion in all tests. We also find that the choice $\delta=0$, i.e., dual update without damping, leads to convergence across the board. Since the convergence curves of $\delta=10^{-10}$ and $\delta=0$ are nearly indistinguishable in our tests and using full dual updates is potentially more advantageous for efficiency, we adopt the choice $\delta=0$ for ensuing experiments, even though its convergence is not covered in this work. 

    
    
    
    


\begin{figure}[!t]
    \centering
    \includegraphics[width=0.95\linewidth]{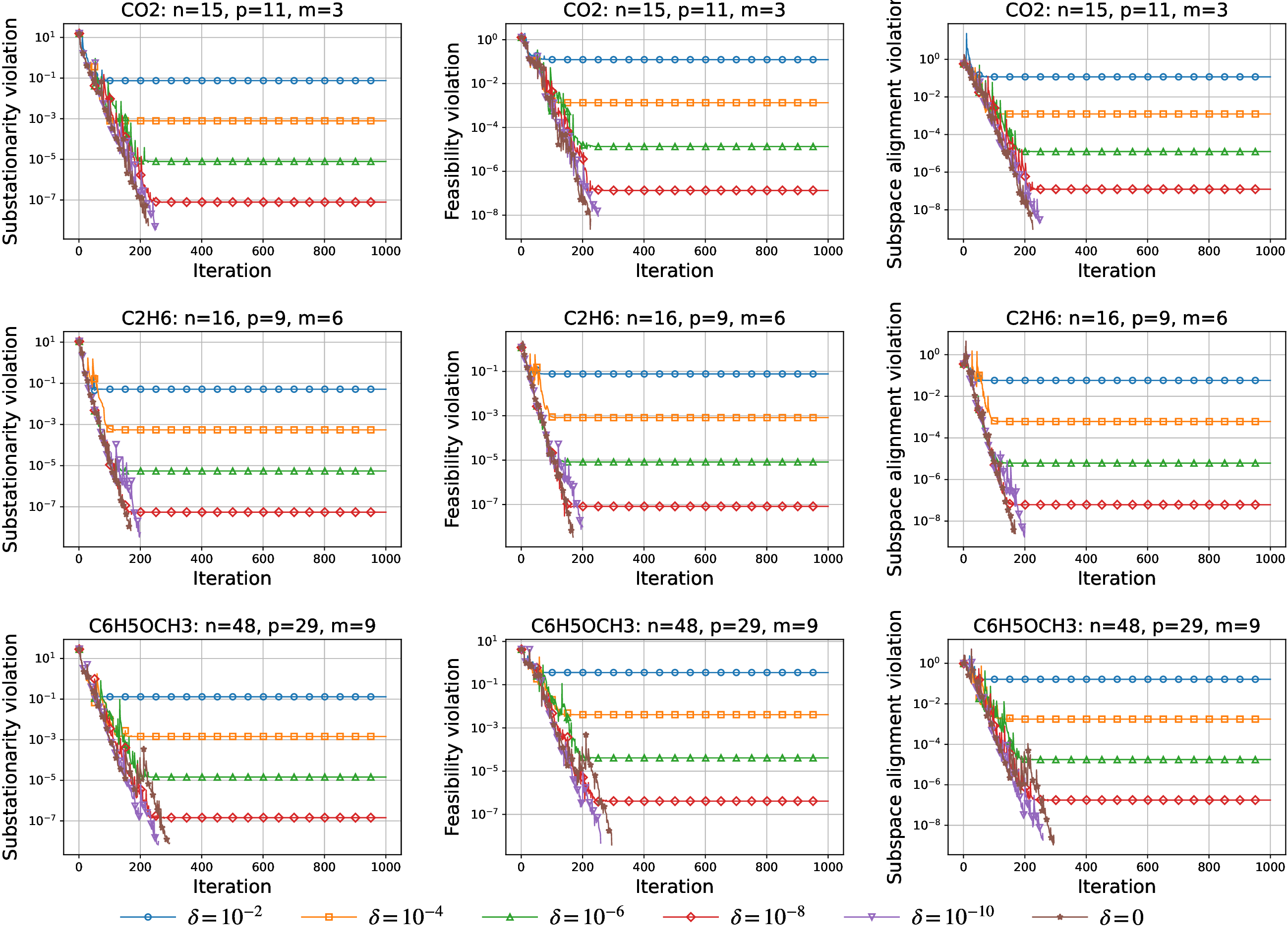}
    \caption{Comparison between different values of $\delta$. The blue curves with circle markers, orange curves with square markers, green curves with triangle markers, red curves with diamond markers, purple curves with down-pointing triangle markers, and brown curves with star markers represent the results using $\delta=10^{-2},10^{-4},10^{-6},10^{-8},10^{-10},0$, respectively. From top to bottom: \ce{CO2}, \ce{C2H6}, and \ce{C6H5OCH3}. From left to right: substationarity violation, feasibility violation, and subspace alignment violation.}
    \label{fig:delta comparison}
\end{figure}

\medskip

\par\noindent\textbf{Penalty parameter $\beta$.} We fix $\eta_k=\eta_k^{\text{ABB}}$, $\delta=0$, and $\tau=0.2$ and vary the penalty parameter $\beta$ within $\{5,10,20,30,40,50,100\}$. The numerical results are collected in Figure \ref{fig:beta comparison}. We observe that DASSP manages to converge even when $\beta$ is chosen as small as five, while its convergence behavior is strongly dependent on the value of $\beta$. Across the considered tests, the choice $\beta=20$ achieves the best overall performance. The efficiency is significantly hindered if $\beta$ is chosen to be either too small or too large relative to 20. 
Subsequently, we choose $\beta=20$ as the default  value.

    
    
    
    


\begin{figure}[!t]
    \centering
    \includegraphics[width=0.95\linewidth]{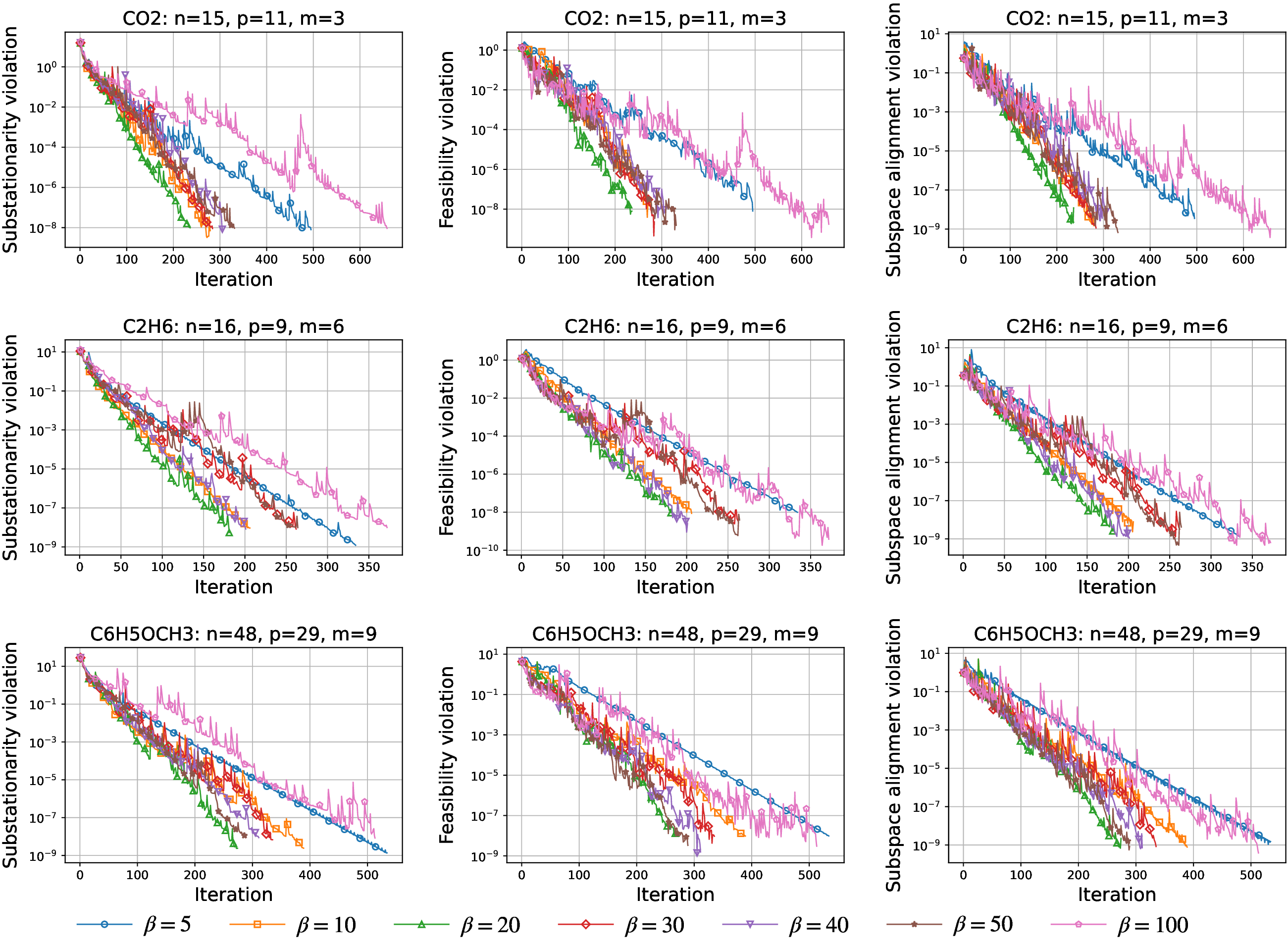}
    \caption{Comparison between different values of $\beta$. The blue curves with circle markers, orange curves with square markers, green curves with triangle markers, red curves with diamond markers, purple curves with down-pointing triangle markers, brown curves with star markers, and pink curves with pentagon markers represent the results using $\beta=5,10,20,30,40,50,100$, respectively. From top to bottom: \ce{CO2}, \ce{C2H6}, and \ce{C6H5OCH3}. From left to right: substationarity violation, feasibility violation, and subspace alignment violation.}
    \label{fig:beta comparison}
\end{figure}

\medskip

\par \noindent \textbf{Under-relaxation parameter $\tau$.} We fix $\eta_k=\eta_k^{\text{ABB}}, \beta=20$ and $\delta=0$ and vary $\tau$ within $\{0.1,0.2,\ldots,0.8\}$. The numerical results are reported in Figure \ref{fig:tau comparison}. We clearly observe that values in the range $[0.1,0.5]$ lead to much steadier convergence, with $\tau=0.2$ and $\tau=0.3$ the best two, in comparison with larger values. This indicates that the algorithm requires sufficient under-relaxation in dual updates to handle the nonlinear subspace alignment constraint. In the following, we set $\tau = 0.2$ as default. 


    
    
    
    


\begin{figure}[!t]
    \centering
    \includegraphics[width=0.95\linewidth]{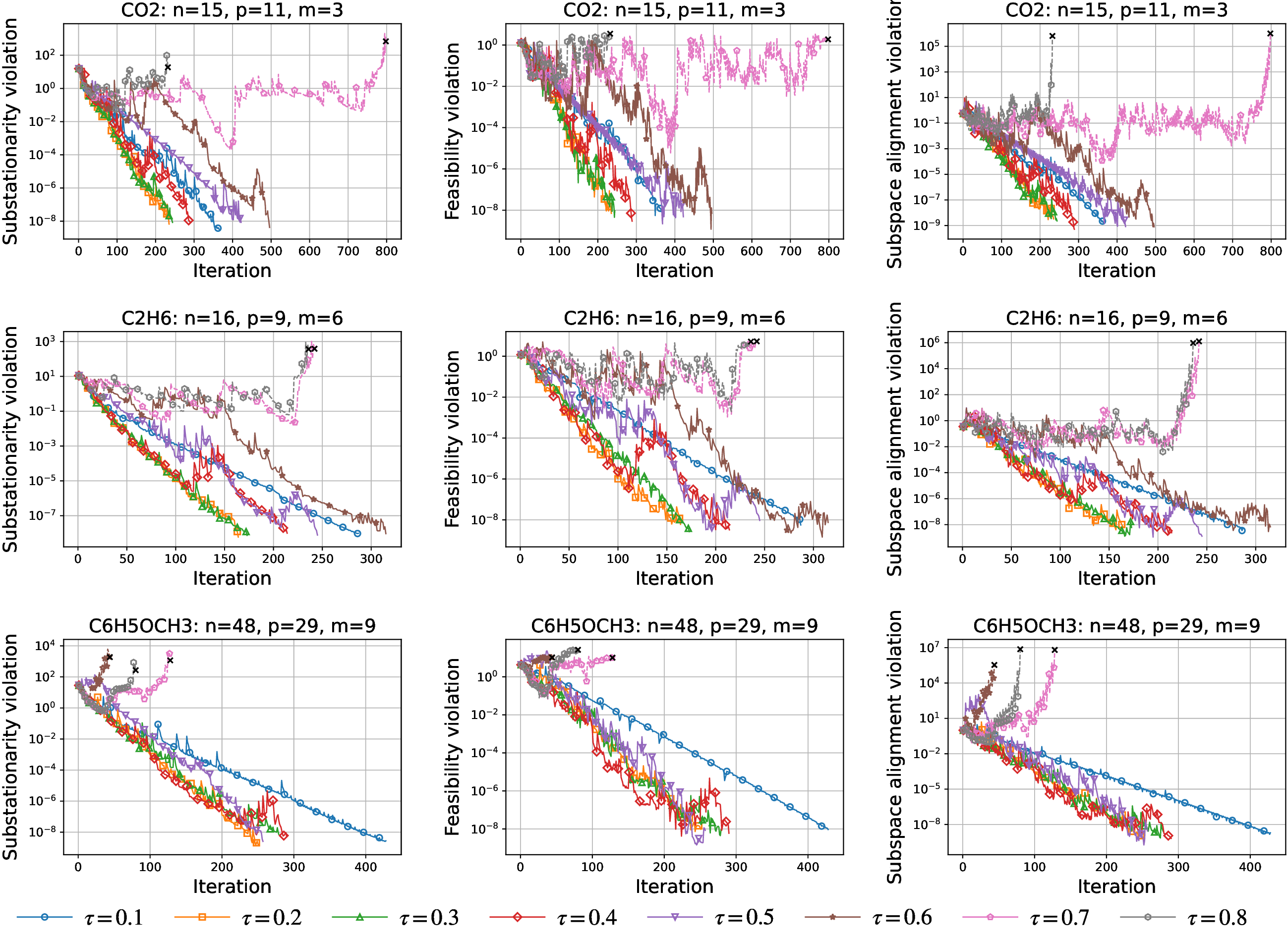}
    \caption{Comparison between different values of $\tau$. The blue curves with circle markers, orange curves with square markers, green curves with triangle markers, red curves with diamond markers, purple curves with down-pointing triangle markers, brown curves with star markers, pink curves with pentagon markers, and gray curves with hexagon markers represent the results using $\tau=0.1,0.2,\ldots,0.8$, respectively. From top to bottom: \ce{CO2}, \ce{C2H6}, and \ce{C6H5OCH3}. From left to right: substationarity violation, feasibility violation, and subspace alignment violation.}
    \label{fig:tau comparison}
\end{figure}

\medskip

\par These results lead to our default setting $(\eta_k,\beta,\delta,\tau)=(\eta_k^{\text{ABB}},20,0,0.2)$ in DASSP. These default parameters will be used in the remaining experiments unless otherwise stated. We remark that these choices are not guaranteed to be optimal; the parameter tuning is not comprehensive in terms of both testing instances and tuning strategy. An adaptive way to choose these parameters would be preferable and will be studied in the future.  

\subsection{Effectiveness in handling nonconvex quadratic constraints}\label{subsec:inexactness of quadratic penalty}

\par In this subsection, we demonstrate the effectiveness of DASSP in dealing with the nonconvex quadratic constraints through the numerical comparison with the QP-SCF, QP-PCAL, and WV methods. The experiments are conducted on an \ce{HCN} molecule under a charge-transfer process. We describe the molecule by restricted Kohn-Sham DFT, adopting the {\tt sto-3g} atomic-orbital basis set \cite{hehre1969self} and the PBE exchange-correlation functional \cite{perdew1996pbe}; therefore, we have $(n,p)=(22,7)$. The charge-transfer process is imposed on the localized populations of the \ce{C} $2p$ and \ce{N} $2p$ subspaces with the shift amount $s=0.5$; hence, $m=2$. The construction of nonconvex quadratic constraints associated with the charge transfer is detailed in Appendix \ref{appsec:nonconvex quadratic constraints}. All methods are initialized from the {\texttt{minao}} guess provided by PySCF \cite{almlof1982principles,van2006starting}; for DASSP, we further set $\Lambda^{(0)}=0$. The penalty parameter $\sigma$ in the QP-SCF and QP-PCAL is varied within $\{1,10,100,1000\}$. The maximum (outer-loop) iteration number for both QP-SCF and WV methods is set to 500. 



\par 
The numerical results are reported in Table \ref{tab:hcn_shift05_comparison}. 
\begin{table}[!t]
\centering
\caption{Comparison between QP-SCF, QP-PCAL, WV, and DASSP on the HCN charge-transfer test. The final energies, feasibility violations (Feas.), numbers of iterations (Iter.) and wall-clock times are listed. For the WV method, the values outside and inside the parentheses refer to the outer iteration numbers and the averaged inner iteration numbers, respectively.}
\label{tab:hcn_shift05_comparison}
\begin{tabular}{l|cccr}
\hline\hline
\multicolumn{1}{c|}{Algorithm} & Energy ($E_h$) & Iter.  & Feas. & Time (s) \\
\hline
QP-SCF ($\sigma=1$)
& $-9.203372001{\rm e}{+01}$ & $71$ & $8.286{\rm e}{-02}$ & $3.14$ \\

QP-SCF ($\sigma=10$)
& $-8.172849033{\rm e}{+01}$ & $500$ & $2.917{\rm e}{+00}$ & $21.24$ \\

QP-SCF ($\sigma=100$)
& $-8.627036448{\rm e}{+01}$ & $500$ & $2.917{\rm e}{+00}$ & $20.98$ \\

QP-SCF ($\sigma=1000$)
& $-8.213467723{\rm e}{+01}$ & $500$ & $2.917{\rm e}{+00}$ & $29.00$ \\
\hline
QP-PCAL ($\sigma=1$)
& $-9.203372001{\rm e}{+01}$ & $199$ & $8.286{\rm e}{-02}$ & $2.84$ \\

QP-PCAL ($\sigma=10$)
& $-9.201331705{\rm e}{+01}$ & $302$ & $1.052{\rm e}{-02}$ & $4.30$ \\

QP-PCAL ($\sigma=100$)
& $-9.201061596{\rm e}{+01}$ & $805$ & $1.087{\rm e}{-03}$ & $11.42$ \\

QP-PCAL ($\sigma=1000$)
& $-9.201033596{\rm e}{+01}$ & $2085$ & $1.091{\rm e}{-04}$ & $29.58$ \\
\hline
WV
& $-9.201030473{\rm e}{+01}$ & $85(8.53)$ & $7.303{\rm e}{-09}$ & $13.68$ \\
\hline
DASSP
& $-9.201030473{\rm e}{+01}$ & $293$ & $3.685{\rm e}{-09}$ & $3.08$ \\
\hline\hline
\end{tabular}
\end{table}
The QP-PCAL (with any $\sigma$), WV, and DASSP are all found to converge, while the QP-SCF only works with $\sigma=1$. When using a larger $\sigma$, the QP-SCF suffers from severe oscillations in the SCF iteration, as shown by the energy curves in Figure \ref{fig:QPSCF_EDFT_mu_curves}. 
\begin{figure}[htb]
    \centering
    \includegraphics[width=1\linewidth]{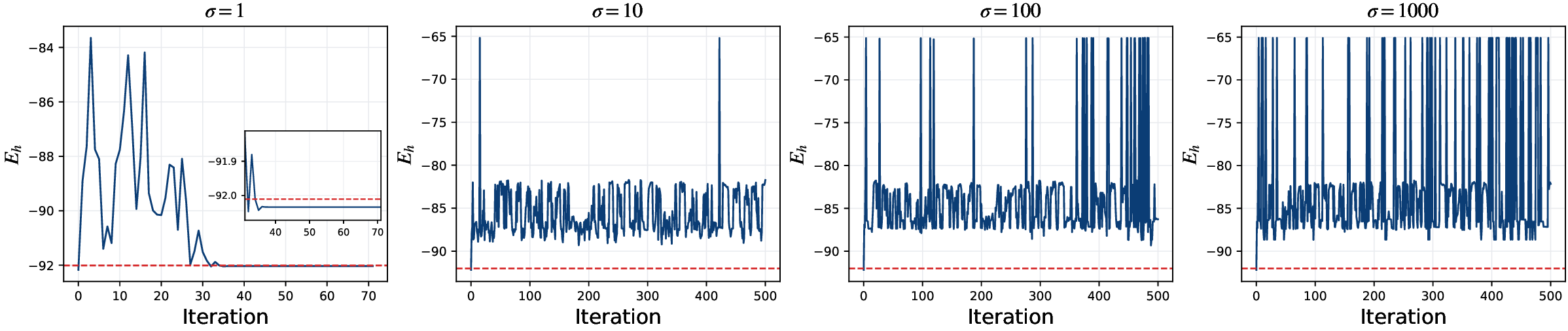}
    \caption{Energy curves of the QP-SCF with different values of $\sigma$. The red dashed line represents the energy obtained by DASSP.}
    \label{fig:QPSCF_EDFT_mu_curves}
\end{figure}
This instability can be understood from the fixed-point viewpoint of the SCF iteration. As shown in \cite{liu2015analysis}, the Jacobian of the SCF fixed-point map depends on the derivative of the effective Hamiltonian. 
For the QP-SCF, such a derivative contains an additional term proportional to $\sigma$. A large penalty parameter $\sigma$ could then enlarge the norm of the Jacobian of the SCF map, thereby spoiling the local contractivity of the SCF iteration. 


\par In contrast to QP-SCF, the QP-PCAL converges robustly regardless of $\sigma$. Nevertheless, it terminates at infeasible solutions with respect to the nonconvex quadratic constraints, even with $\sigma=1000$. The results in Table \ref{tab:hcn_shift05_comparison} also suggest that we could improve its final feasibility violation by further increasing the value of $\sigma$, yet at the price of using more computational resources due to the worse conditioning. As PCAL is observed to converge, we could infer from such deficiency of the QP-PCAL that the quadratic penalty function \eqref{eqn:CDFT quadratic penalty} is likely not exact for the nonlinear quadratic constraints in problem \eqref{eqn:CDFT discrete general}. 

\par Compared with QP-SCF and QP-PCAL, the WV and DASSP are free from the issues related to quadratic penalty and are effective in dealing with the nonlinear quadratic constraints. In particular, the proposed DASSP reformulates these constraints into linear ones by introducing an extra variable $Y$, and ensures the feasibility of $Y$ via closed-form projections. The discrepancy between $X$ and $Y$ is handled within an augmented Lagrangian framework, which does not require an excessively large penalty parameter. 
In the following experiments, we focus on the comparison between the proposed DASSP and the WV method.

\subsection{Comparison of computational efficiency}\label{subsec:efficiency test}

\par 
In this subsection, we compare the efficiency of the proposed DASSP and the WV method on a series of charge-transfer tests. We consider three molecular systems, \ce{HCN}, \ce{C2H4}, and \ce{CH3COCH3}, described at the same level of theory as in Section \ref{subsec:inexactness of quadratic penalty}. For \ce{HCN}, $(n,p)=(22,7)$ and the transfer is imposed between the \ce{C} $2p$ and \ce{N} $2p$ orbitals; for \ce{C2H4}, $(n,p)=(28,8)$ and the transfer is imposed between the 2p orbitals on the two carbon atoms; for \ce{CH3COCH3}, $(n,p)=(52,16)$ and the transfer is imposed between the $2p$ orbitals associated with the \ce{C} and \ce{O} atoms of the carbonyl group. Hence, we have $m=2$ for all the tests in this subsection. The shift amount $s$ is varied within $\{0.2,0.4,0.6,0.8,1.0\}$. Both the WV method and DASSP are initialized from the {\tt{minao}} guess provided by PySCF; for DASSP, we further set $\Lambda^{(0)}=0$. 
The maximum number of outer iterations in the WV method is set to $500$.

\par The numerical results are summarized in Table \ref{tab:shift_all}. 
\begin{table}[!t]
\centering
\caption{Comparison between DASSP and WV method on charge-transfer tests under different shift amounts $s$. The final energies, feasibility violations (Feas.), numbers of iterations (Iter.) and wall-clock times are listed. For the WV method, the values outside and inside the parentheses refer to the outer iteration numbers and the averaged inner iteration numbers, respectively.}
\label{tab:shift_all}
\begin{tabular}{c|c|cccr}
\hline\hline
\multicolumn{6}{c}{\ce{HCN} system: $n=22,\;p=7$} \\
\hline
$s$ & Algorithm & Energy ($E_h$) & Iter. & Feas. & Time (s)\\
\hline
\multirow{2}{*}{$0.2$} 
& WV    & $-9.208154276{\rm e}{+01}$ & $41(14.27)$ & $5.791{\rm e}{-09}$ & $9.06$ \\
& DASSP & $-9.208154275{\rm e}{+01}$ & $261$       & $2.838{\rm e}{-09}$ & $2.75$ \\
\hline
\multirow{2}{*}{$0.4$} 
& WV    & $-9.204088277{\rm e}{+01}$ & $71(10.34)$ & $4.313{\rm e}{-09}$ & $12.14$ \\
& DASSP & $-9.204088276{\rm e}{+01}$ & $232$       & $1.075{\rm e}{-09}$ & $2.52$ \\
\hline
\multirow{2}{*}{$0.6$} 
& WV    & $-9.197283505{\rm e}{+01}$ & $102(7.74)$ & $7.026{\rm e}{-09}$ & $13.90$ \\
& DASSP & $-9.197283504{\rm e}{+01}$ & $280$        & $2.049{\rm e}{-09}$ & $1.87$ \\
\hline
\multirow{2}{*}{$0.8$} 
& WV    & $-9.187696735{\rm e}{+01}$ & $76(8.17)$ & $5.660{\rm e}{-09}$ & $10.71$ \\
& DASSP & $-9.187696736{\rm e}{+01}$ & $303$       & $5.862{\rm e}{-10}$ & $2.05$ \\
\hline
\multirow{2}{*}{$1.0$} 
& WV    & $-9.175261870{\rm e}{+01}$ & $19(23.84)$ & $1.662{\rm e}{-09}$ & $4.87$ \\
& DASSP & $-9.175261870{\rm e}{+01}$ & $332$        & $8.110{\rm e}{-10}$ & $3.29$ \\
\hline\hline

\multicolumn{6}{c}{\ce{C2H4} system: $n=28,\;p=8$} \\
\hline
$s$ & Algorithm & Energy ($E_h$) & Iter. & Feas. & Time (s) \\
\hline
\multirow{2}{*}{$0.2$} 
& WV    & $-7.749661121{\rm e}{+01}$ & $19(19.47)$ & $3.883{\rm e}{-09}$ & $9.97$ \\
& DASSP & $-7.749661123{\rm e}{+01}$ & $340$        & $4.743{\rm e}{-09}$ & $4.91$ \\
\hline
\multirow{2}{*}{$0.4$} 
& WV    & $-7.746190903{\rm e}{+01}$ & $19(18.89)$ & $3.750{\rm e}{-09}$ & $9.54$ \\
& DASSP & $-7.746190902{\rm e}{+01}$ & $493$        & $6.683{\rm e}{-09}$ & $7.25$ \\
\hline
\multirow{2}{*}{$0.6$} 
& WV    & $-7.740360110{\rm e}{+01}$ & $19(21.21)$ & $4.508{\rm e}{-09}$ & $10.65$ \\
& DASSP & $-7.740360108{\rm e}{+01}$ & $364$        & $7.581{\rm e}{-09}$ & $5.32$ \\
\hline
\multirow{2}{*}{$0.8$} 
& WV    & $-7.732089387{\rm e}{+01}$ & $19(19.21)$ & $7.192{\rm e}{-09}$ & $9.78$ \\
& DASSP & $-7.732089389{\rm e}{+01}$ & $536$        & $1.256{\rm e}{-09}$ & $7.95$ \\
\hline
\multirow{2}{*}{$1.0$} 
& WV    & $-7.721248200{\rm e}{+01}$ & $19(18.53)$ & $5.354{\rm e}{-09}$ & $9.33$ \\
& DASSP & $-7.721248200{\rm e}{+01}$ & $494$        & $2.345{\rm e}{-10}$ & $7.34$ \\
\hline\hline

\multicolumn{6}{c}{\ce{CH3COCH3} system: $n=52,\;p=16$} \\
\hline
$s$ & Algorithm & Energy ($E_h$) & Iter. & Feas. & Time (s)\\
\hline
\multirow{2}{*}{$0.2$} 
& WV    & $-1.903782768{\rm e}{+02}$ & $17(20.24)$  & $1.141{\rm e}{-09}$ & $21.84$ \\
& DASSP & $-1.903782767{\rm e}{+02}$ & $354$        & $3.418{\rm e}{-09}$ & $18.29$ \\
\hline
\multirow{2}{*}{$0.4$} 
& WV    & $-1.903295420{\rm e}{+02}$ & $112(6.29)$  & $6.302{\rm e}{-09}$ & $57.17$ \\
& DASSP & $-1.903295420{\rm e}{+02}$ & $372$        & $5.223{\rm e}{-09}$ & $18.61$ \\
\hline
\multirow{2}{*}{$0.6$} 
& WV    & $-1.902483369{\rm e}{+02}$ & $39(12.41)$  & $5.329{\rm e}{-09}$ & $32.64$ \\
& DASSP & $-1.902483368{\rm e}{+02}$ & $332$        & $4.926{\rm e}{-10}$ & $16.89$ \\
\hline
\multirow{2}{*}{$0.8$} 
& WV    & $-1.901343269{\rm e}{+02}$ & $235(3.78)$  & $7.396{\rm e}{-09}$ & $80.03$ \\
& DASSP & $-1.901343270{\rm e}{+02}$ & $348$        & $1.401{\rm e}{-10}$ & $17.42$ \\
\hline
\multirow{2}{*}{$1.0$} 
& WV    & $-1.899868593{\rm e}{+02}$ & $101(5.42)$  & $5.848{\rm e}{-10}$ & $43.51$ \\
& DASSP & $-1.899868593{\rm e}{+02}$ & $283$        & $1.927{\rm e}{-09}$ & $13.81$ \\
\hline\hline
\end{tabular}
\end{table}
For all the tests here, both methods converge and attain essentially the same energies with comparable feasibility violations. For \ce{C2H4}, the efficiency of DASSP is slightly better than that of the WV method. But for both \ce{HCN} and \ce{CH3COCH3}, DASSP is found to yield prominent accelerations. 
Such superiority could be credited to the single-loop nature of DASSP. In contrast, although the WV method may require only a few outer iterations, its efficiency depends heavily on the inner dual Newton steps.

\subsection{Comparison of robustness}\label{subsec:charge-constrained DFT} 

\par In this subsection, we aim to numerically showcase the robustness advantage of DASSP over the WV method. We consider a representative charge-localization benchmark for CDFT, the water dimer cation \ce{(H2O)2+} in vacuum, following the setup in \cite{ahart2022cp2k}. 
The two water molecules are separated by a center-of-mass distance of 10 \AA. We describe the system under unrestricted Kohn-Sham DFT, using the PBE exchange-correlation functional \cite{perdew1996pbe} with the D3 dispersion correction \cite{grimme2010consistent} and the \texttt{def2-svp} basis set \cite{weigend2005balanced}, for which we have $(n,p)=(96,19)$. The charge-localization constraint ($m=1$) is imposed between two water fragments, based on the Hirshfeld charge partitioning scheme; please refer to Appendix \ref{appsec:nonconvex quadratic constraints} for details.  

\par We initialize both methods at random perturbations of a reference solution and compare their robustness using success rates as functions of the perturbation level. We first compute the reference solution by running DASSP with a tolerance of $10^{-8}$ and denote the final splitting variable by $Y_{\rm ref}\in\S^n$. For any perturbation level $\theta>0$, the initial point for both DASSP and WV methods is generated as $Y^{(0)}:=Y_{\text{ref}}+\theta N$, where $N \in \S^n$ is a symmetric random matrix sampled from the Gaussian distribution. We vary the perturbation level $\theta$ within $\{0,10^{-8},10^{-7},10^{-6},10^{-5},10^{-4},10^{-3},10^{-2},10^{-1},1\}$; for each value of $\theta$, $100$ independent trials are conducted using both methods. 
Throughout the tests in this subsection, we set $\Lambda^{(0)}=0$ for DASSP and $\bmu^{(0)}=0$ for the WV method. The maximum number of outer loops in the WV method is set to 500.

\par The numerical results are reported in Figure \ref{fig:success rate}, where the red and blue curves denote DASSP and the plain WV method, respectively.
\begin{figure}
    \centering
    \includegraphics[width=0.6\linewidth]{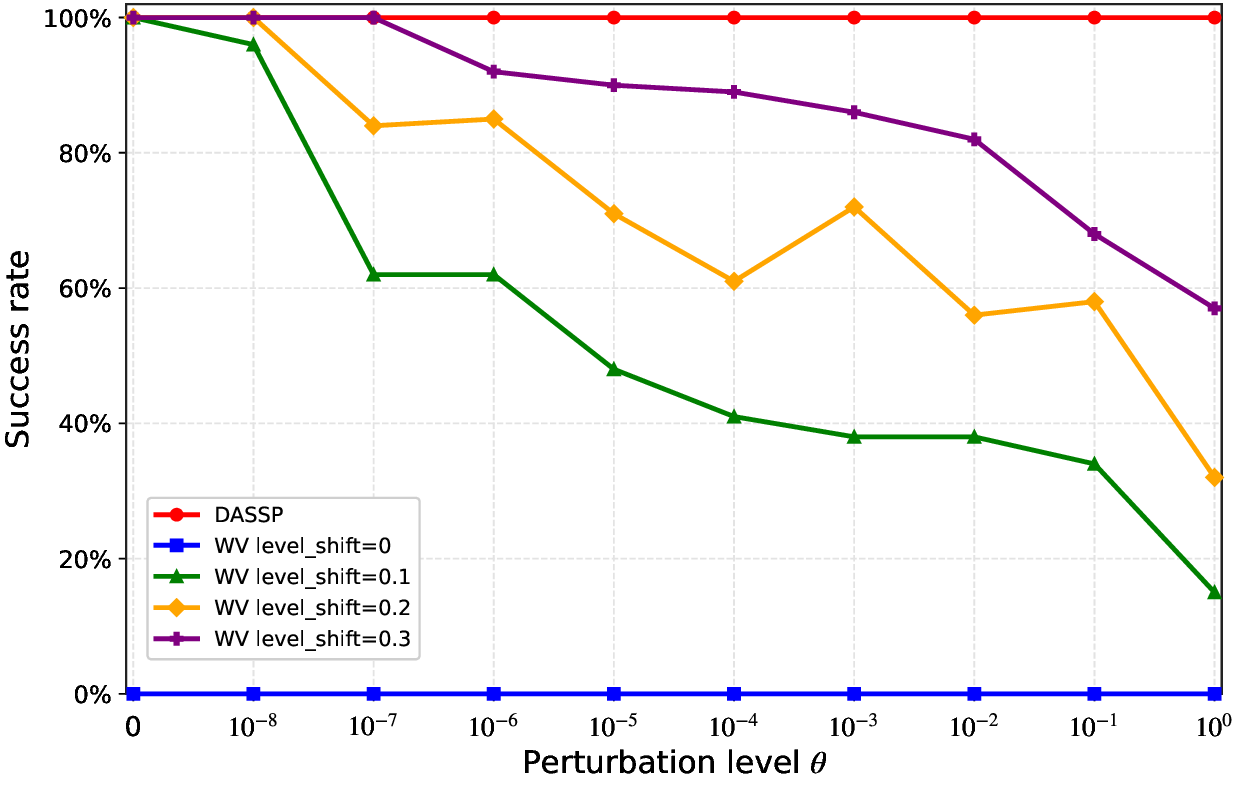}
    \caption{Success rates of DASSP and WV methods as functions of the perturbation level on the \ce{(H2O)2+} charge-localization test. The red curve with circle markers represents the results of DASSP, while the blue curve with square markers, green curve with triangle markers, yellow curve with diamond markers, and purple curve with plus sign markers represent those of the WV method with a level shift of 0, 0.1, 0.2, and 0.3, respectively.}
    \label{fig:success rate}
\end{figure}
The proposed DASSP is found to yield success rates of 100\% across the considered perturbation levels on this example. In sharp contrast, the plain WV method fails in all tests without any exception; in particular, it does not work even when the initialization is taken directly from the reference solution. 
As discussed in Section \ref{sec:introduction}, the convergence of the WV method could be problematic when the eigengap of the effective Hamiltonian is tiny. Therefore, we evaluate the eigengap of $F(X_{\rm ref},\mu_{\rm ref})$ at the reference solution, 
where $X_{\rm ref}\in\St_p(\R^n)$ satisfies $X_{\rm ref}X_{\rm ref}^\top=Y_{\rm ref}$ and $\mu_{\text{ref}}$ is obtained by solving the linear system \eqref{eqn:projection linear system}. 
The eigengap turns out to be extremely small, approximately $1.103\times 10^{-12}$. 

\par Intuitively, with such a tiny eigengap, the outer loop of the WV method can be unstable in selecting the lowest $p$ eigenvectors. Worse still, a tiny eigengap is also harmful to the inner Newton steps, as the required Jacobian relies on its reciprocal \cite{wu2005direct}. We corroborate these explanations by monitoring the eigengap and consecutive subspace distance,  $\snorm{X^{(k)}(X^{(k)})^\top-X^{(k+1)}(X^{(k+1)})^\top}_{\F}$, of the WV method, using either random initialization with a perturbation level of $\theta=10^{-8}$ described above or the \texttt{minao} guess provided by PySCF (whose distance from the reference solution is $1.649$). We collect the results in Figure \ref{fig:wv_eigengap_subspace}. 
\begin{figure}[htp]
\centering
\begin{subfigure}[t]{0.48\textwidth}
    \centering
    \includegraphics[width=\linewidth]{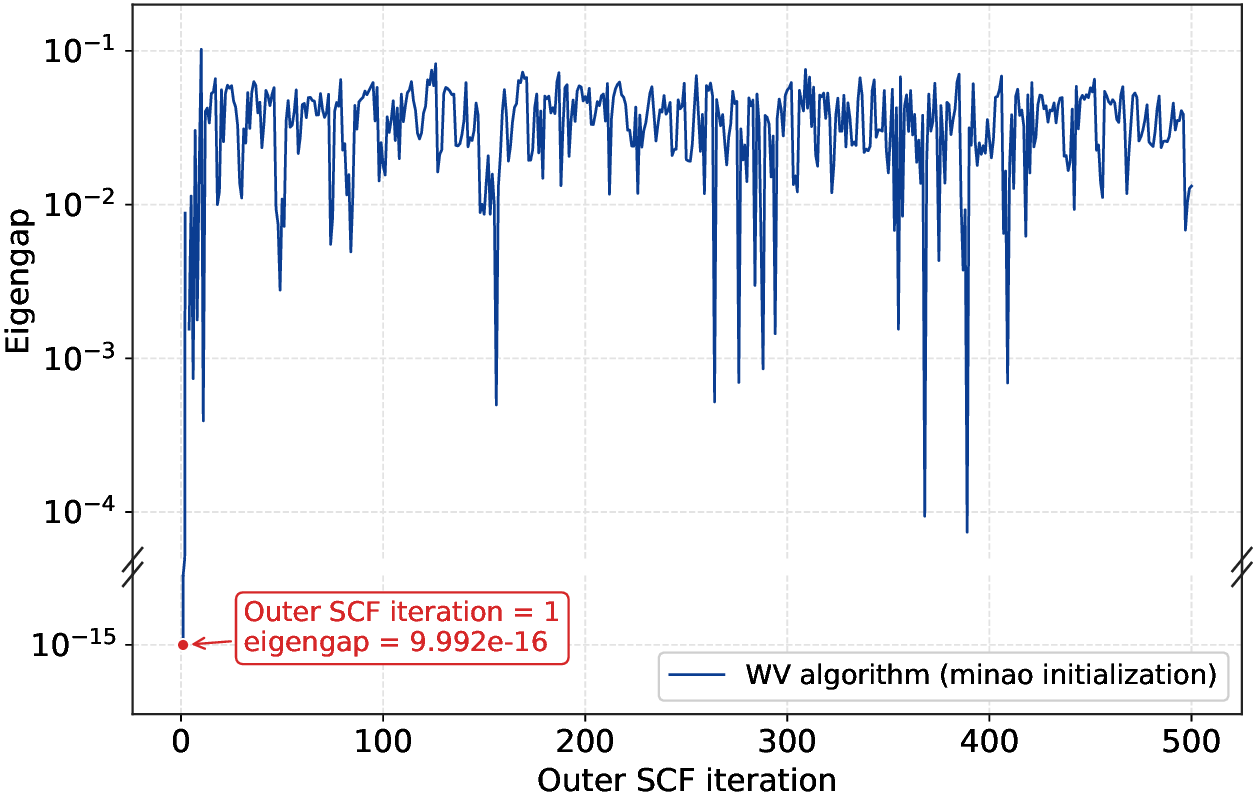}
    \caption{Eigengap, \texttt{minao}.}
    \label{fig:wv_minao_eigengap}
\end{subfigure}
\begin{subfigure}[t]{0.48\textwidth}
    \centering
    \includegraphics[width=\linewidth]{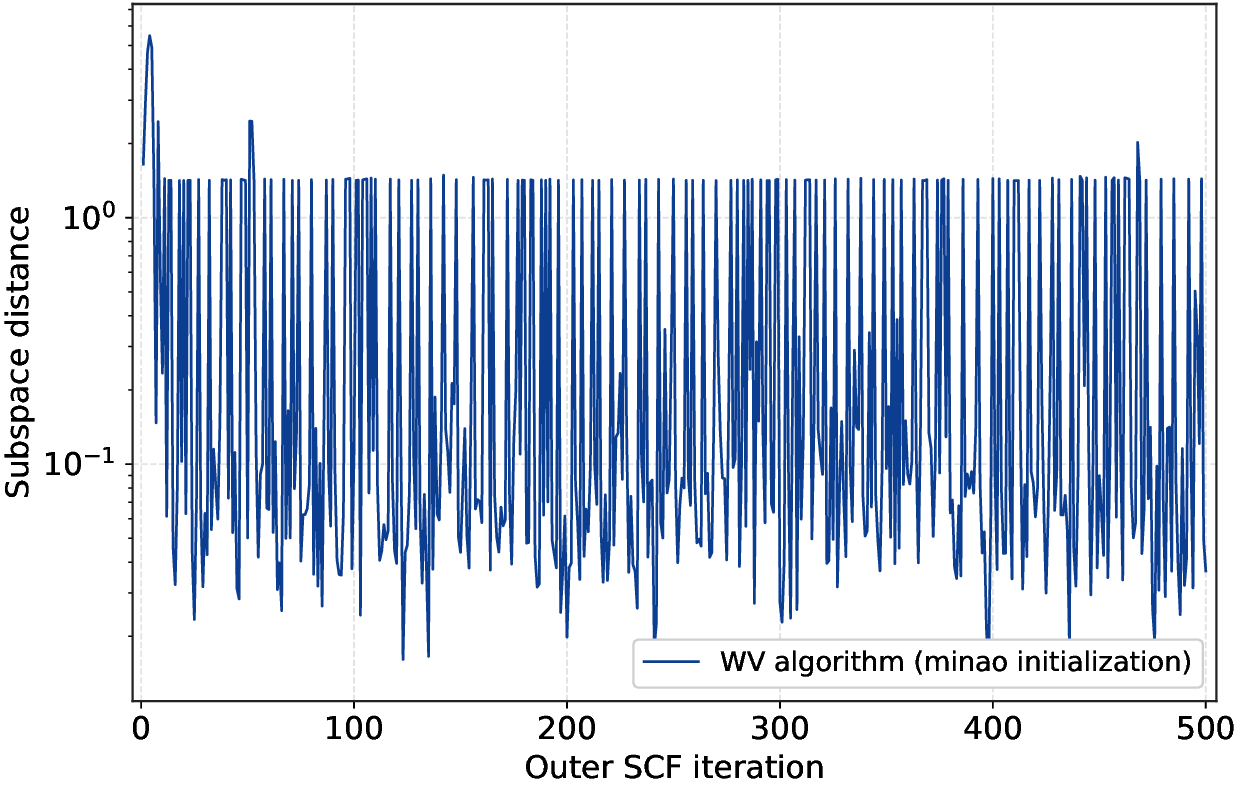}
    \caption{Subspace distance, \texttt{minao}.}
    \label{fig:wv_minao_subspace}
\end{subfigure}\\[0.2cm]
\begin{subfigure}[t]{0.48\textwidth}
    \centering
    \includegraphics[width=\linewidth]{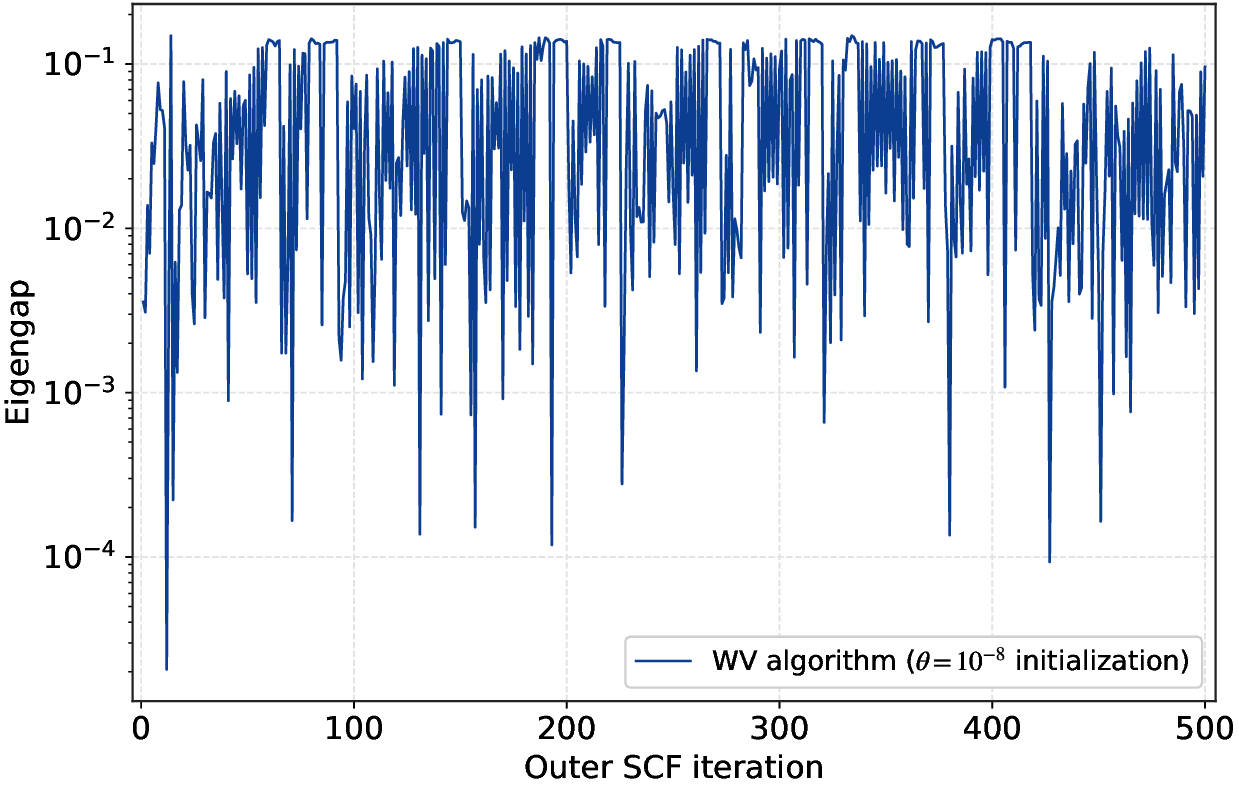}
    \caption{Eigengap, random.}
    \label{fig:wv_sigma_eigengap}
\end{subfigure}
\begin{subfigure}[t]{0.48\textwidth}
    \centering
    \includegraphics[width=\linewidth]{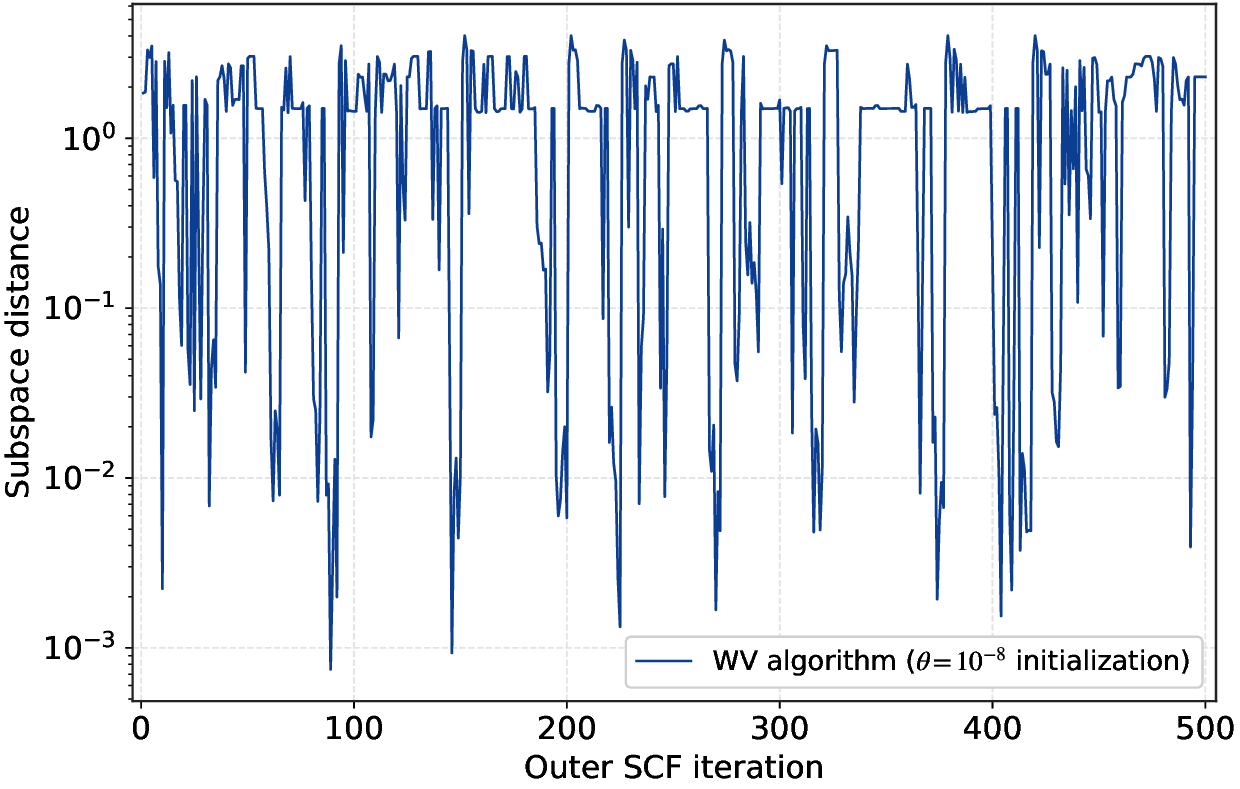}
    \caption{Subspace distance, random.}
    \label{fig:wv_sigma_subspace}
\end{subfigure}
\caption{Evolutions of the eigengap of the effective Hamiltonian and consecutive subspace distance of the WV algorithm with either \texttt{minao} guess (panels (a) and (b)) or randomly perturbed initialization (panels (c) and (d)).}
\label{fig:wv_eigengap_subspace}
\end{figure}
Under both initializations, 
the eigengap of the effective Hamiltonian fluctuates severely over the outer iterations and becomes small from time to time, and the subspace distance exhibits persistent oscillations. These observations indicate that the WV method fails to stabilize subspaces, even when the initial point is already sufficiently close to the reference solution.

\par We acknowledge that the convergence proof of DASSP also involves an eigengap-type Assumption \ref{assumption} \ref{asp:eigengap} on the matrix $A^{(k)}$ in the $X$-subproblem \eqref{eqn:X subproblem}, which is not the same as the effective Hamiltonian in the WV method. Although we cannot remove this assumption in the current analysis, we present here a numerical investigation. To stay in line with the convergence analysis in Section \ref{sec:convergence analysis}, we consider DASSP using $\delta=10^{-10}$, starting with random initialization with a perturbation level of $\theta=1$ or the \texttt{minao} guess by PySCF. The numerical results are presented in Figure \ref{fig:DASSP_eigengap}. 
\begin{figure}[htb]
\centering

\begin{subfigure}[t]{0.48\textwidth}
    \centering
    \includegraphics[width=\linewidth]{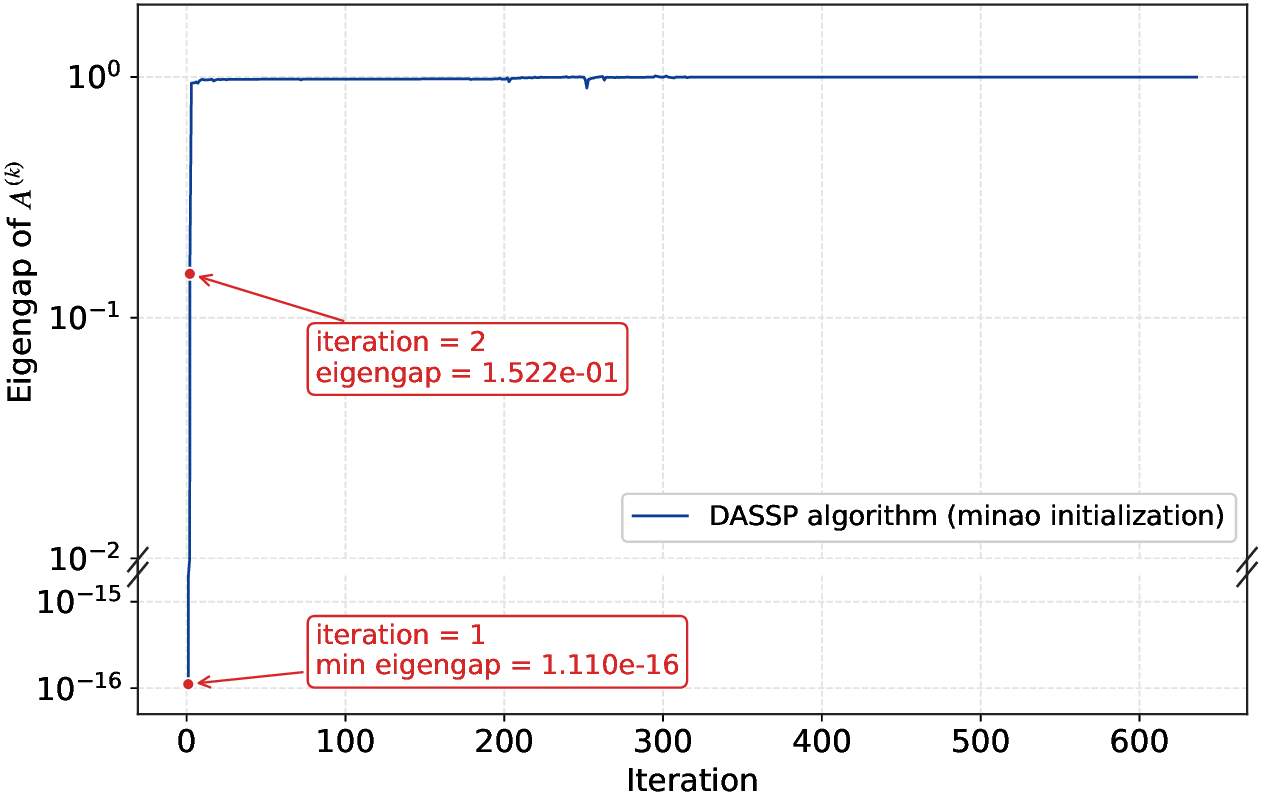}
    \caption{Eigengap, \texttt{minao}.}
    \label{fig:DASSP_minao_eigengap}
\end{subfigure}
\begin{subfigure}[t]{0.48\textwidth}
    \centering
    \includegraphics[width=\linewidth]{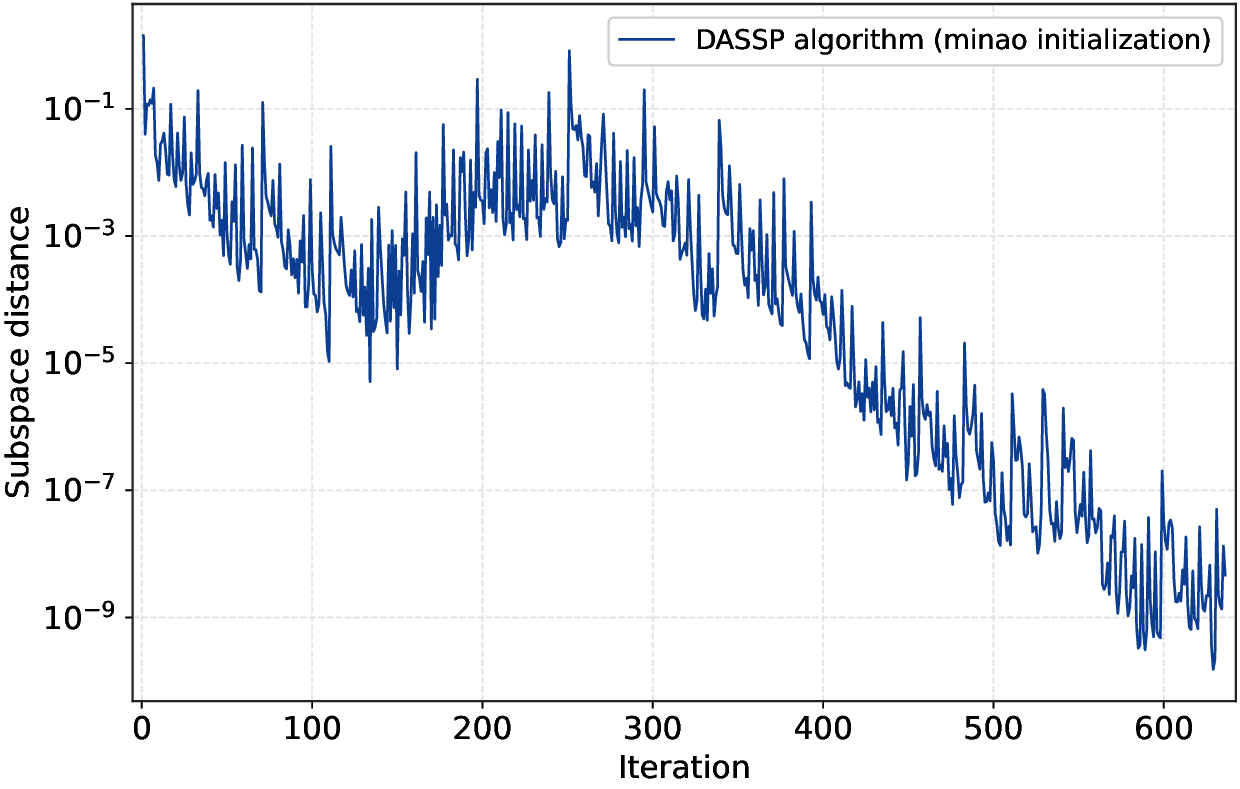}
    \caption{Subspace distance, \texttt{minao}.}
    \label{fig:DASSP_minao_subspace}
\end{subfigure}\\[0.2cm]
\begin{subfigure}[t]{0.48\textwidth}
    \centering
    \includegraphics[width=\linewidth]{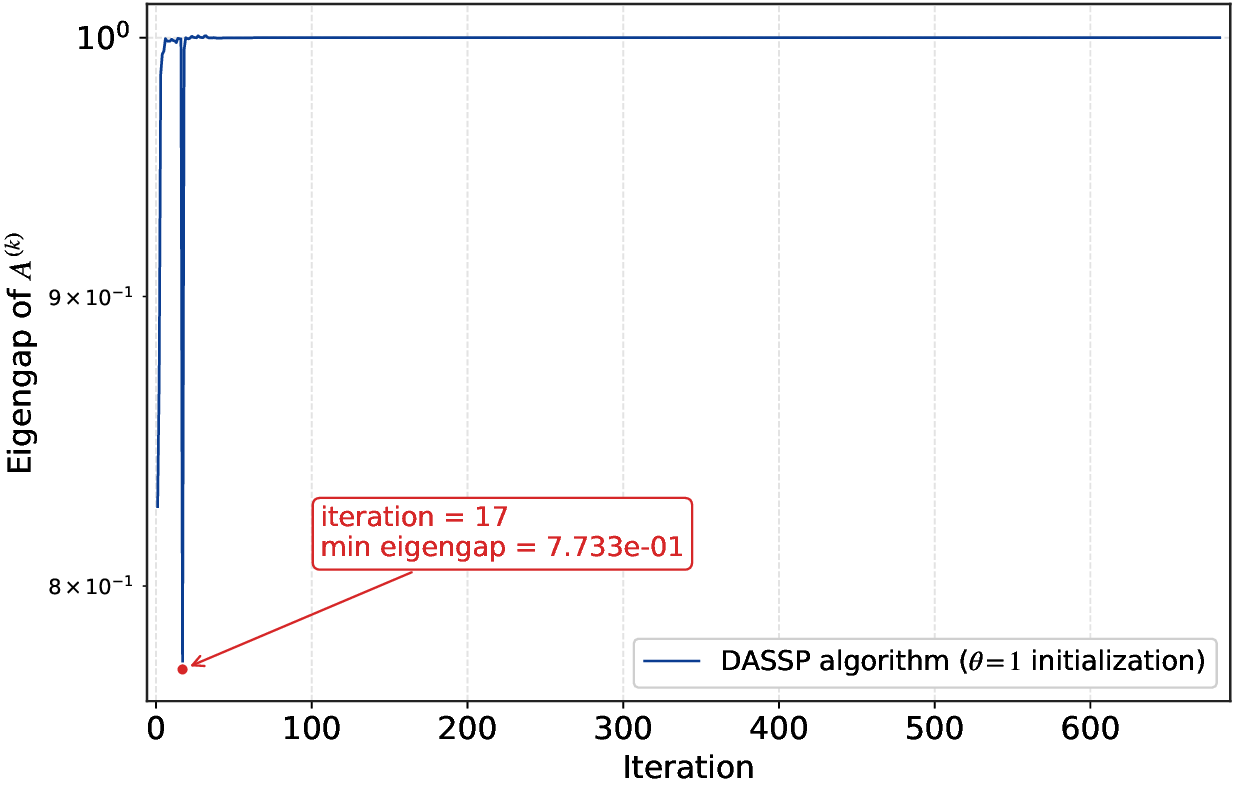}
    \caption{Eigengap, random.}
    \label{fig:DASSP_theta_eigengap}
\end{subfigure}
\begin{subfigure}[t]{0.48\textwidth}
    \centering
    \includegraphics[width=\linewidth]{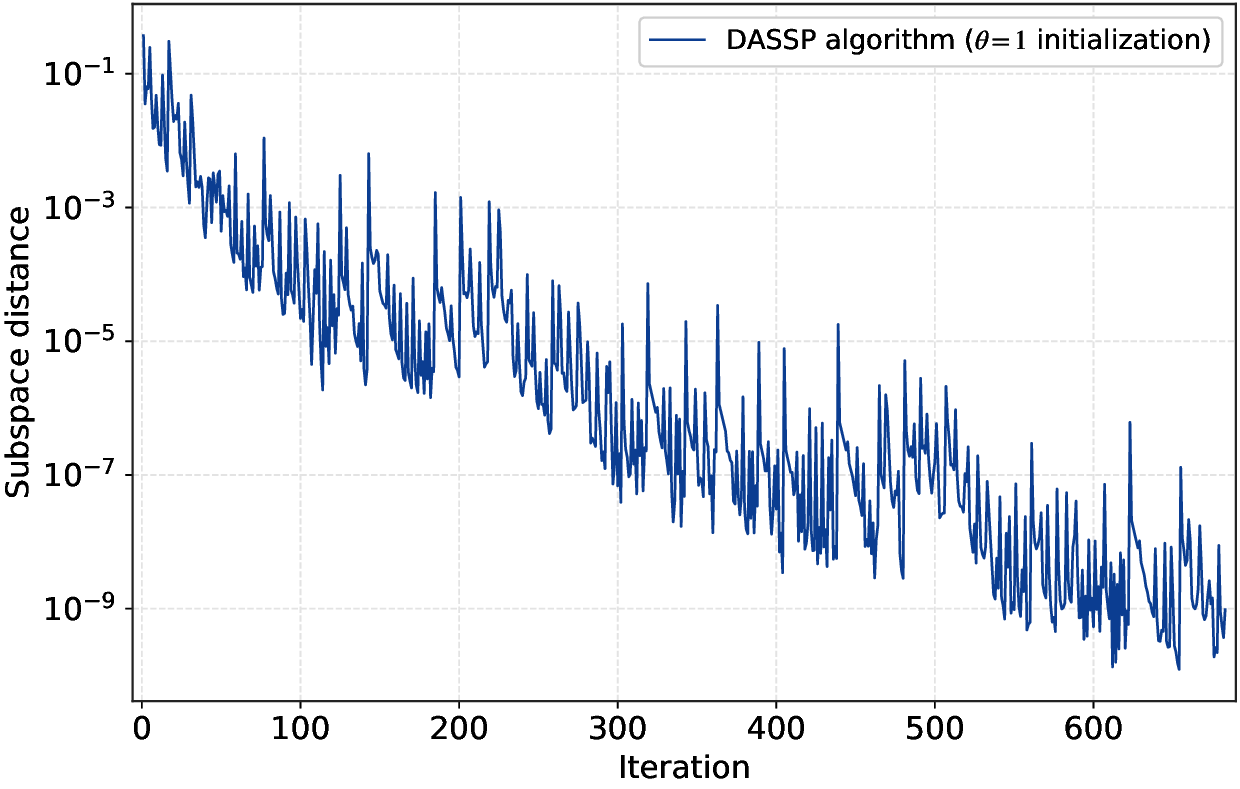}
    \caption{Subspace distance, random.}
    \label{fig:DASSP_theta_subspace}
\end{subfigure}

\caption{Evolutions of the eigengap of the matrix $A^{(k)}$ and consecutive subspace distance of DASSP with either \texttt{minao} guess (panels (a) and (b)) or randomly perturbed initialization (panels (c) and (d)).}
\label{fig:DASSP_eigengap}
\end{figure}
These plots show that the eigengap of $A^{(k)}$ quickly rises to a positive plateau and remains bounded away from zero, even though the initial eigengap of $A^{(0)}$ may vanish (see panel (a) in Figure \ref{fig:DASSP_eigengap}). This suggests that Assumption \ref{assumption} \ref{asp:eigengap} is not restrictive in practice. By virtue of the behavior shown in Figure \ref{fig:DASSP_eigengap}, DASSP converges steadily to the reference solution. 

\par In the SCF literature (e.g., \cite{bai2022sharp}), a commonly used strategy to mitigate the issues related to small eigengap is to introduce a level shift into the effective Hamiltonian, which artificially enlarges the eigengap between the $p$-th and $(p+1)$-th smallest eigenvalues and may improve the stability of the SCF iteration. In PySCF, this can be achieved by setting the tag \texttt{level\_shift}. We also test the WV method with the amount of level shift within $\{0.1,0.2,0.3\}$, under the same perturbed initialization strategy above. The success rates are also shown in Figure \ref{fig:success rate}. One can see that level shifting indeed improves the robustness of the WV method, and as the amount of level shift increases, the success rate is getting better. Nevertheless, the success rate of the WV method still exhibits a clear decreasing trend as the perturbation level grows. In addition, as pointed out in \cite{bai2022sharp}, increasing the amount of level shift would degrade the efficiency of the algorithm. In comparison, DASSP offers both efficiency and robustness under the default parameter setting. 


\section{Conclusions}\label{sec:conclusions}

\par In this paper, we study the optimization problem arising from CDFT. Compared with that for ground-state DFT calculations, it is more challenging due to the presence of both the Stiefel manifold and nonconvex quadratic constraints. By exploiting the inherent rotation invariance, we decouple the two groups of constraints by introducing an additional nonlinear subspace alignment constraint. For the reformulated problem, we propose a single-loop damped ADMM termed DASSP, which, to the best of our knowledge, is the first algorithm with rigorous convergence guarantees for CDFT calculations. Each iteration of DASSP consists of a spectral minimization step, a projected-gradient step, and a damped dual update. Numerical tests have been conducted on synthetic and realistic charge-transfer and charge-localization CDFT calculations. The obtained results demonstrate that DASSP is effective in dealing with the nonconvex quadratic constraints, and exhibits comparable performance or substantial acceleration over state-of-the-art methods without compromising robustness.

\par Several directions merit further investigation. First, it is of theoretical interest to the broader ADMM community to study the convergence properties in the undamped case ($\delta=0$). Our numerical results show that the undamped version converges steadily and is efficient, while the analysis in this paper only applies to the damped regime ($\delta>0$). Second, it is  preferable to investigate an adaptive way to choose the parameters in DASSP for general settings. Third, it would be practically meaningful to apply or adapt DASSP to other CDFT settings. For example, in noncollinear calculations, one could impose constraints on local atomic magnetic moments to study the potential energy surfaces of magnetic materials. The underlying problems may involve larger numbers of nonconvex quadratic constraints than the considered charge-transfer or charge-localization problems considered here. We could thus anticipate a more prominent advantage of DASSP over the double-loop method, which relies on a dual scheme for inner iterations. 
Finally, it is promising to integrate DASSP for first-principles calculations into machine-learning-assisted quantum chemistry and quantum physics workflows. For challenging applications such as magnetic materials and charge-transfer processes, large, high-quality first-principles datasets are essential but remain scarce. The efficiency and robustness of DASSP suggest its potential for generating high-quality CDFT datasets in such settings.

\appendix

\section{Discretized formulation of CDFT }\label{appsec:UKS functional}

\par We describe how the continuous CDFT formulation \eqref{eqn:CDFT continuous general} is discretized into the finite-dimensional optimization model \eqref{eqn:CDFT discrete general} used throughout the paper. For simplicity, we take restricted Kohn-Sham DFT (RKS-DFT) in closed-shell collinear calculations as an example.

\par In collinear calculations, spinor orbitals are assumed to be of the form $\psi_i=(\varphi_i,0)^\top$ (spin-up) or $\psi_i=(0,\varphi_i)^\top$ (spin-down), where $\varphi_i$ is a spatial orbital. Within RKS-DFT, each spatial orbital is doubly occupied by electrons of opposite spin with identical spatial functions. Let $\{\chi_\mu\}_{\mu=1}^{\nb}$ be a set of real atomic-orbitals, where $\nb\in\N$ denotes the number of basis functions. For brevity, we assume here (and in Appendix \ref{appsec:nonconvex quadratic constraints}) that the atomic orbitals are orthonormal in the $L^2$ inner product (which can be enforced using the L\"owdin transformation). The spatial orbital is expressed as a linear combination of atomic orbitals:
$$\varphi_i=\sum_{\mu=1}^{\nb}X_{\mu i}\chi_\mu,\quad i=1,\ldots,N_{\rm o}~~\text{with}~~N_{\rm o}:=N/2.$$
Here, $N_{\rm o}$ denotes the number of occupied spatial orbitals and $X\in\R^{\nb\times N_{\rm o}}$ denotes the orbital coefficient matrix. The orthonormality condition on the spinor orbitals then reduces to the constraint $X^\top X=I_{N_{\rm o}}$ on $X$. The reduced one-body density matrix is defined as $Y:=XX^\top\in\S^{\nb}$. The spin densities are thus given by
\begin{equation}
    \rho^{\uparrow,\uparrow}(\rr)=\rho^{\downarrow,\downarrow}(\rr)=\sum_{1\le\mu,\nu\le\nb}Y_{\mu\nu}\chi_\mu(\rr)\chi_\nu(\rr)\quad\text{and}\quad\rho^{\uparrow,\downarrow}=\rho^{\downarrow,\uparrow}\equiv0.
    \label{eqn:RKS spin densities}
\end{equation}

\par In the chosen atomic-orbital basis, the discretized RKS-DFT energy functional is
$$f(Y)=2\trace(hY)+2\trace(J(Y)Y)+E_{\xc}(Y)+E_{\rm nn},$$
where $h=[h_{\mu\nu}]\in \S^{\nb}$ is the discretized one-body Hamiltonian, 
$$h_{\mu\nu}:=\int_{\R^3}\chi_\mu(\rr)\lrbracket{-\frac12\Delta+V_{\rm ext}(\rr)}\chi_\nu(\rr)\dd\rr,\quad1\le\mu,\nu\le\nb,$$
with $V_{\rm ext}$ the external potential (e.g., induced by nuclei), $J: \S^{\nb}\to \S^{\nb}$ is the self-adjoint Coulomb operator, defined as 
\begin{equation*}
[J(Y)]_{\mu\nu}:=\sum_{1\leq \sigma,\tau\leq \nb}g_{\mu\nu\sigma\tau}Y_{\sigma\tau},~~1\leq \mu,\nu\leq \nb,
\end{equation*}
with the two-body integrals
\begin{equation*}
g_{\mu\nu\sigma\tau}:=\int_{\R^3}\int_{\R^3}\frac{\chi_\mu(\rr)\chi_\nu(\rr)\chi_\sigma(\rr')\chi_\tau(\rr')}{\norm{\rr-\rr'}}\dd \rr \dd \rr',~~1\leq \mu,\nu,\sigma,\tau\leq \nb,
\end{equation*}
$E_{\xc}:\S^{\nb}\to \R$ is the exchange-correlation functional, and $E_{\text{nn}}$ is the nuclear repulsion term. 

\par In the chosen atomic-orbital basis, equation \eqref{eqn:RKS spin densities} implies that the additional density constraints in CDFT can be discretized into the form $\trace(X^\top W_jX)=b_j$, where in particular 
\begin{equation}
    [W_j]_{\mu\nu}=\int_{\R^3}\chi_\mu(\rr)\big(w_j^{\uparrow,\uparrow}(\rr)+w_j^{\downarrow,\downarrow}(\rr)\big)\chi_\nu(\rr)\dd\rr,\quad1\le\mu,\nu\le\nb.
    \label{eqn:discretization of weight function}
\end{equation}

\par In summary, the finite-dimensional discretization of the CDFT problem \eqref{eqn:CDFT continuous general} within RKS-DFT takes the form \eqref{eqn:CDFT discrete general} with $n=\nb$ and $p=N_{\rm o}$. We remark that the extensions to the unrestricted case (UKS-DFT) and noncollinear case are straightforward. In UKS-DFT, the spatial orbitals in the spin-up and spin-down channels are independently optimized and may differ. In the noncollinear case, both components of the spinor orbitals are nonzero and no fixed global spin quantization axis exists.

\section{Construction of nonconvex quadratic constraints}\label{appsec:nonconvex quadratic constraints}

\par\noindent\textbf{Charge-transfer problems.} This construction is used in the experiments in Sections \ref{subsec:inexactness of quadratic penalty} and \ref{subsec:efficiency test}. Following standard population analysis in CDFT, fragment charges are measured in a localized-orbital basis.  Let $C_{\text{loc}}\in \R^{\nb\times\nb}$ denote the localized-orbital transformation satisfying $C_{\text{loc}}^{\top}C_{\text{loc}}=I_{\nb}$, which can be constructed by the meta-L\"owdin orthogonalization routine in PySCF. Let $\calI_A$ and $\calI_B$ denote the sets of localized orbitals associated with fragments $A$ and $B$, respectively. We define the corresponding population matrices by
\begin{equation*}
W_A:=\sum_{\ell \in \calI_A}c_{\ell}c_{\ell}^{\top}\in \S^{\nb},\qquad W_B:=\sum_{\ell\in \calI_B}c_{\ell}c_{\ell}^{\top}\in \S^{\nb},
\end{equation*}
where $c_{\ell}\in \R^{\nb}$ is the $\ell$-th column of $C_{\text{loc}}$. Let $X_{\text{ref}}\in \R^{n\times p}$ be a reference solution obtained from, e.g., a ground-state DFT calculation and let $s>0$ denote the prescribed charge-transfer amount.
Then we impose the charge-transfer constraints as follows:
\begin{equation*}
\trace{(X^{\top}W_A X)}=b_A:=\trace{(X^{\top}_{\text{ref}}W_A X_{\text{ref}})}+s,\quad  \trace{(X^{\top}W_B X)}=b_B:=\trace{(X^{\top}_{\text{ref}}W_B X_{\text{ref}})}-s.
\end{equation*}
That is, we enforce a charge transfer of size $s$ from fragment $B$ to fragment $A$. 

\medskip 

\par\noindent\textbf{Charge-localization problems.} This applies to the experiments in Section \ref{subsec:charge-constrained DFT}. Let $L$ and $R$ denote the two water fragments, corresponding to the left and right molecules, respectively. For each atom $a$, let $\rho^0_{a}$ be the promolecular atomic density centered at the nuclear position $\RR_a\in \R^3$, obtained from a DFT calculation with atom $a$ in isolation. In our implementations, we use the same exchange-correlation functional and basis set for all atoms. The Hirshfeld weight function is then defined by 
\begin{equation*}
w(\rr):=\frac{\sum_{a\in L}\rho_{a}^{0}(\rr-\RR_a)-\sum_{a\in R}\rho_{a}^{0}(\rr-\RR_a)}{\sum_{a}\rho_{a}^{0}(\rr-\RR_a)}.
\end{equation*}
Here, the denominator is the total promolecular density. Following the standard Hirshfeld implementation \cite{ahart2022cp2k}, when this denominator is smaller than $10^{-12}$, we set $w(\rr)=0$ for numerical stability. The Hirshfeld charge constraint takes the form $\trace{(X^{\top}WX)}=N_{c}$, where the matrix $W$ is defined as in equation \eqref{eqn:discretization of weight function} with $w^{\uparrow,\uparrow}=w^{\downarrow,\downarrow}=w$. For the water dimer cation, we set $N_c=1$, which corresponds to a one-electron charge-difference constraint between the two water fragments.

\normalem
\bibliography{ref}
\bibliographystyle{plain}

\end{document}